\newenvironment{red}{\relax\color{red}}{\relax}
\newenvironment{blue}{\relax\color{blue}}{\hspace*{.5ex}\relax}
\newcommand{\ber}{\begin{red}}
\newcommand{\er}{\end{red}}
\newcommand{\beb}{\begin{blue}}
\newcommand{\eb}{\end{blue}}
\theoremstyle{plain}
\newtheorem{thm}{Theorem}[section]
\newtheorem{lem}[thm]{Lemma}
\newtheorem{prop}[thm]{Proposition}
\newtheorem{cor}[thm]{Corollary}
\newtheorem{conj}[thm]{Conjecture}
\theoremstyle{definition}
\newtheorem{definition}[thm]{Definition}
\newtheorem{example}[thm]{Example}
\newtheorem{remark}[thm]{Remark}
\numberwithin{equation}{section} \numberwithin{figure}{section}
\numberwithin{table}{section}
\begin{document}
\title{Rigid reflections and Kac--Moody algebras}
\author[K.-H. Lee]{Kyu-Hwan Lee$^{\star}$}
\thanks{$^{\star}$This work was partially supported by a grant from the Simons Foundation (\#318706).}
\address{Department of
Mathematics, University of Connecticut, Storrs, CT 06269, U.S.A.}
\email{khlee@math.uconn.edu}

\author[K. Lee]{Kyungyong Lee$^{\dagger}$}
\thanks{$^{\dagger}$This work was partially supported by the University of Nebraska--Lincoln and Korea Institute for Advanced Study.}
\address{Department of Mathematics, University of Nebraska--Lincoln, Lincoln, NE 68588, U.S.A.,
and Korea Institute for Advanced Study, Seoul 02455, Republic of Korea}
\email{klee24@unl.edu; klee1@kias.re.kr}

\begin{abstract}
Given any Coxeter group, we define rigid reflections and rigid roots 
using non-self-intersecting curves on a Riemann surface with labeled curves. When the Coxeter group arises from an acyclic quiver,  they are related to the rigid representations of the quiver. 
For a family of rank $3$ Coxeter groups, we show that there is a surjective map from  the set of reduced positive roots of a rank $2$ Kac--Moody algebra onto the set of rigid reflections. We conjecture that this map is bijective.
\end{abstract}

\maketitle

\section{Introduction}

Let $Q$ be an acyclic quiver of rank $n$, i.e. a quiver with $n$ vertices and without oriented cycles, and $\mathrm{mod}(Q)$ be the category of finite dimensional representations of $Q$, or equivalently, the category of finite dimensional modules over the path algebra of $Q$. Among the objects in $\mathrm{mod}(Q)$, the indecomposable representations $M$ with $\mathrm{Ext}^1(M,M)=0$ are called {\em rigid} and their dimension vectors are called {\em real Schur roots}. They play prominent roles in understanding the category $\mathrm{mod}(Q)$. Moreover, the real Schur roots form an important subset of the set of positive real roots of the Kac--Moody algebra $\mathfrak g(Q)$ associated to $Q$. 

In an attempt to establish a form of Homological Mirror Symmetry \cite{Kon}, we proposed in a previous paper \cite{LL}  a correspondence between rigid representations in $\mathrm{mod}(Q)$ and the set of certain non-self-intersecting
curves on a Riemann surface $\Sigma$ with $n$ labeled curves. The conjecture is now proven by Felikson and Tumarkin \cite{FT} for {\em 2-complete} quivers $Q$ where, by definition, every pair of vertices in $Q$ is connected by more than two edges. However it is wide open for general acyclic quivers. 

The conjectural correspondence factors through a family of reflections in the Weyl group of $\mathfrak g(Q)$ to relate non-self-intersecting curves in $\Sigma$ with real Schur roots. Since  reflections make sense 
for any Coxeter groups, one can consider such a family of reflections in an arbitrary Coxeter group. Indeed, let $W$ be a Coxeter group with $n$ ordered generators:
$$
W=\langle s_1,s_2,...,s_n \ : \   s_1^2=\cdots=s_n^2=e, \ (s_is_j)^{m_{ij}}=e \ (i \neq j) \rangle,
$$ where $m_{ij}\in\{2,3,4,...\} \cup \{ \infty \}$. 
Then we define {\em rigid reflections} in $W$ using non-self-intersecting curves on the Riemann surface $\Sigma$ and {\em rigid roots} to be the associated positive roots to the rigid reflections in the set of positive roots of $W$. (See Definitions \ref{def-rr} and \ref{def-rroot}.)  These definitions are in line with the conjectural correspondence.  In particular, when $m_{ij}=\infty$ for all $i\neq j$, the set of rigid reflections is in bijection with the set of rigid representations of any 2-complete acyclic quiver $Q$ of rank $n$.

This paper is concerned with an unexpected, surprising  phenomenon that the rigid roots of $W$ are parametrized by the positive roots of a seemingly unrelated Kac--Moody algebra $\mathcal H$. 
This phenomenon seems true for a wide range of Coxeter groups $W$, and we will show this for  a family of rank 3 Coxeter groups in this paper.  

To be precise, fix a positive integer $m\geq 2$ and consider the following Coxeter group
 $$W(m)=\langle s_1,s_2,s_3 \ : \   s_1^2=s_2^2=s_3^2=(s_1s_2)^m=(s_2s_3)^m=e \rangle.
 $$
Here we put $m_{13}=m_{31}=\infty$ as the usual convention.
Let  $\mathcal H(m)$ be the rank 2 Kac--Moody algebra associated with the Cartan matrix ${\tiny \begin{pmatrix}  2&-m\\-m&2\end{pmatrix}}$. We denote an element of the root lattice of $\mathcal H(m)$ by $[a,b]$, $a, b \in \mathbb Z$, where $[1,0]$ and $[0,1]$ are the positive simple roots. 
A root $[a,b]$ of $\mathcal H(m)$ is called {\em reduced} if $\gcd(a,b)=1$ and $ab\neq 0$. A reduced root determines a non-self-intersecting curve $\eta$ on the torus $\Sigma$ with triangulation by three labeled curves. 
Then we define a function, $[a,b] \mapsto s([a,b]) \in W$, by reading off the labels of the intersection points of $\eta$ with the labeled curves on $\Sigma$, and make the following conjecture.

\begin{conj} \label{conj}
For $m \ge 2$, the function, $[a,b] \mapsto s([a,b])$, is a bijection from the set of reduced positive roots of $\mathcal H(m)$ to  the set of rigid reflections of $W(m)$. 
\end{conj}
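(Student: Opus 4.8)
The plan is to use the curve $\eta$ as a common bridge and to reduce the statement to an explicit combinatorial comparison indexed by coprime pairs $(a,b)$. First I would make the construction of $s([a,b])$ fully explicit: fix the simple closed geodesic representative of the homotopy class of slope determined by $[a,b]$ on the torus $\Sigma$, and record the cyclic word $i_1 i_2 \cdots i_\ell$, $i_k \in \{1,2,3\}$, of labels of its transverse crossings with the three triangulating curves. The associated element is the reflection built from this label word as in the construction above (a conjugate of a simple generator by the prefix up to the central crossing), so the first task is to verify it is independent of basepoint and of the chosen geodesic representative, hence a well-defined reflection of $W(m)$. A useful preliminary is that the number of crossings is governed linearly by $a$ and $b$, so $\ell$ and the label word obey a Stern--Brocot recursion on slopes: the word of a mediant slope is an explicit shuffle (cutting sequence) of the words of its two Farey parents.

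For surjectivity I would exploit that both sides are parametrized by curves. By Definition \ref{def-rr} every rigid reflection of $W(m)$ is read off from a non-self-intersecting curve on $\Sigma$; on the torus every such curve is freely homotopic to a unique primitive simple closed geodesic, whose slope is a coprime pair and thus corresponds to a reduced root $[a,b]$. After checking that the label word is a homotopy invariant of the curve, the reflection it produces equals $s([a,b])$, which yields surjectivity. I would organize this by running the mediant recursion simultaneously on the three families --- reduced roots of $\mathcal H(m)$, homotopy classes of simple closed curves, and rigid reflections --- and verifying that all three are closed under the same mediant operation with matching base cases coming from $s_1, s_2, s_3$. The imaginary reduced roots (primitive lattice points on or inside the imaginary cone of $\mathcal H(m)$) require separate care: they correspond to the limiting Farey directions, and one must confirm that their curves are genuinely non-self-intersecting and produce rigid reflections.

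The main obstacle is injectivity, which is precisely the part left open by the conjecture and which I expect to be genuinely hard; in general I would only aim to establish surjectivity in full, as in the theorem, and injectivity on a controlled part. The difficulty is that the word defining $s([a,b])$ must be simplified in $W(m)$ using the braid relations $(s_1 s_2)^m = (s_2 s_3)^m = e$, and a priori two distinct reduced roots could collapse to the same reflection after cancellation. The strategy would be to build an invariant recovering $[a,b]$ from the reflection --- for instance by showing the label word is already non-cancelling in a suitable normal form, or by tracking a length or lattice statistic preserved by the mediant recursion --- so that the slope, and hence the coprime pair, can be read back. For real roots one can hope to import the known bijection between rigid reflections and real Schur roots, but for the imaginary reduced roots no such dictionary exists, and controlling cancellation there is the crux that keeps the full bijection conjectural.
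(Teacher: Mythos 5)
First, note that the paper does not prove this statement: it is stated as a conjecture, with only surjectivity established (Theorem \ref{thm-main}) and bijectivity checked only for $m=2$ (Example \ref{exm-2}). You correctly scope your attempt to surjectivity and acknowledge that injectivity is open, so the comparison is really between your surjectivity plan and the paper's proof of Theorem \ref{thm-main}.

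There is a genuine gap in your surjectivity argument, and it sits exactly where the paper's main technical work lies. You pass from a rigid reflection to a non-self-intersecting curve, to a primitive slope $(a,b)$ with $\gcd(a,b)=1$, and then assert this ``corresponds to a reduced root $[a,b]$.'' That last step is false: a reduced root of $\mathcal H(m)$ must in addition satisfy $a^2+b^2-mab\le 1$, and most coprime positive pairs fail this (e.g.\ $[4,1]$ for $m=3$, as the paper points out). Your curve/geodesic argument only delivers what the paper's Lemma \ref{lem-page} delivers, namely that every rigid reflection is $s([a,b])$ for some coprime pair $[a,b]\in\mathcal P^+$. The crux is then to show that for every such $[a,b]$ there is an honest reduced root $[a_0,b_0]$ with $s([a_0,b_0])=s([a,b])$. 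The paper does this by a descent on $Q([a,b])=a^2+b^2-mab$: using the sequence $F_n=mF_{n-1}-F_{n-2}$ it applies the lattice transformation $[a,b]\mapsto[a,b]\mp m(\mp F_{n-1}a\pm F_nb)[F_n,F_{n-1}]$, shows one of the two choices stays in $\mathcal P^+$ (Lemma \ref{lem-ab}), preserves the reflection via a delicate isotopy argument resting on $(s_3s_2s_1\,s([F_n,F_{n-1}]))^m=e$ (Lemmas \ref{sFnFn1} and \ref{lem-sequiv}), and strictly decreases $Q$ (Lemma \ref{lem-ineq}), so the process terminates at a root. Your proposed fix---running a Stern--Brocot/mediant recursion simultaneously on reduced roots, curves, and reflections---cannot work as stated, because the set of reduced roots of $\mathcal H(m)$ is \emph{not} closed under the mediant operation (for $m=3$, the mediant of the roots $[3,1]$ and $[1,0]$ is $[4,1]$, which is not a root), so the three families do not march in step. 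Two smaller points: admissible curves are arcs through the vertex, not closed curves, so the reduction to a line segment requires peeling off the spirals around the endpoints, which the paper handles by the explicit conjugation $s_3s_2s_1\,s([a,b])\,s_1s_2s_3=s([c,d])$ of Lemma \ref{sab}(3); and the reduced imaginary roots form the primitive points of a full two-dimensional cone, not a set of ``limiting Farey directions,'' so they are not a lower-dimensional boundary case.
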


The case $m=2$  will be verified in Example \ref{exm-2}, and the case $m=3$ will be established in a forthcoming paper \cite{LL1} where  
mutations of quivers and cluster variables will be exploited.
As the main result of this paper, we prove

\begin{thm} \label{thm-in}
For $m \ge 2$, the function in Conjecture \ref{conj} is a surjection.
\end{thm}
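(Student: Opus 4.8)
The plan is to convert the statement into a problem about straight curves on the torus and then to match, step by step, the recursion that generates the positive roots of $\mathcal H(m)$ with a recursion on these curves.

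First I would unwind Definitions \ref{def-rr} and \ref{def-rroot}. By construction every rigid reflection of $W(m)$ is read off from a non-self-intersecting curve on $\Sigma$, and $\Sigma$ is a torus triangulated by the three labeled curves; normalizing these to have primitive slopes $(1,0)$, $(0,1)$, $(1,1)$ (which pairwise meet exactly once, giving the required triangulation), every essential simple curve is isotopic to a straight curve of primitive slope $(p,q)$ with $\gcd(p,q)=1$. The label sequence met while traversing such a curve depends only on its isotopy class: it crosses the three labeled curves $q$, $p$ and $|p-q|$ times respectively (the torus intersection numbers), and the resulting cyclic word in $s_1,s_2,s_3$ produces exactly the palindromic reflection that Definition \ref{def-rroot} assigns to $[p,q]$. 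Thus I would reduce Theorem \ref{thm-in} to the following: every primitive slope $(p,q)$ whose curve represents a rigid reflection gives a reflection of the form $s([a,b])$ for some reduced positive root $[a,b]$ of $\mathcal H(m)$.

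Next I would set up two matching recursions. On the algebra side, the positive real roots are generated from $[1,0]$ and $[0,1]$ by the Weyl reflections $r_1[a,b]=[mb-a,b]$ and $r_2[a,b]=[a,ma-b]$, while the positive imaginary roots form the $\langle r_1,r_2\rangle$-orbit of the fundamental cone $2a\le mb,\ 2b\le ma$. On the geometric side, the Dehn twists along the labeled curves act on slopes through $\operatorname{SL}_2(\mathbb Z)$, and once the $m$-fold braiding forced by the relations $(s_1s_2)^m=(s_2s_3)^m=e$ is taken into account, these twists realize precisely the maps $r_1$ and $r_2$. The crucial claim is that the reading-off map is equivariant for these two actions, i.e.\ $s(r_i[a,b])$ is obtained from $s([a,b])$ by the move corresponding to the $i$-th twist. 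I would prove this by comparing the crossing words of a curve and its twist and using the length-$m$ relations of $W(m)$ to cancel the repeated blocks $s_is_js_i\cdots$ that the braiding creates.

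Granting equivariance, surjectivity follows by induction on $\max(p,q)$, equivalently on the crossing number $2\max(p,q)$. When $\min(p,q)\le 1$ the slope is already a real root and there is nothing to prove. Otherwise I would apply whichever of $r_1,r_2$ strictly decreases $\max(p,q)$; by equivariance it suffices to realize the resulting smaller-slope reflection by a reduced positive root, which holds by induction, and since $r_1,r_2$ permute the non-simple reduced positive roots this yields a reduced positive root realizing $s([p,q])$ as well. The slopes on which neither $r_1$ nor $r_2$ decreases $\max(p,q)$ are exactly those in the fundamental cone $2a\le mb,\ 2b\le ma$; these are the coprime imaginary roots and are themselves reduced positive roots, which closes the induction.

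The main obstacle is the equivariance step. One must show that the word read off a twisted curve reduces, through the order-$m$ relations, to the word predicted by the recursion $[a,b]\mapsto[mb-a,b]$; equivalently, that a slope lying outside the root system produces a crossing word that is \emph{not} reduced in $W(m)$ — so that a length-$m$ relation forces the cancellation dropping it to a smaller slope — whereas a slope that is a root produces a reduced, and hence rigid, word. This is exactly where the Cartan integer $m$ of $\mathcal H(m)$ is transmitted to $W(m)$ through its order-$m$ relations, and controlling these cancellations uniformly in $m$, simultaneously in the real and imaginary regimes, is the delicate heart of the argument; the case $m=2$ of Example \ref{exm-2} and the quiver-mutation approach announced in \cite{LL1} for $m=3$ indicate the shape of the bookkeeping required.
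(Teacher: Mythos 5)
There is a genuine gap, and it is at the heart of your induction scheme. The simple Weyl reflections $r_1[a,b]=[mb-a,b]$ and $r_2[a,b]=[a,ma-b]$ preserve the quadratic form $Q([a,b])=a^2+b^2-mab$, and by \eqref{eqn-root} being a root of $\mathcal H(m)$ is equivalent to $Q\le 1$; so the entire $\langle r_1,r_2\rangle$-orbit of a non-root consists of non-roots and never enters the fundamental cone. Concretely, for $m=3$ the primitive slope $[4,1]$ has $Q=5$: it is not a root, your base case ``$\min(p,q)\le 1$ implies real root'' fails for it, and no sequence of $r_1,r_2$ ever turns it into a (reduced positive) root. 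Yet $s([4,1])=s_2s_3s_2s_3s_2s_3s_2=s_2$ is a rigid reflection that the theorem obliges you to realize by a reduced root --- namely $[1,1]$, which has $Q=-1$ and lies in a \emph{different} Weyl orbit. So even granting your equivariance claim in full, the induction cannot reach the rigid reflections coming from non-root slopes, which is precisely the nontrivial content of surjectivity. The paper avoids this by using a different family of transformations: the parabolic shears $[a,b]\mapsto[a,b]-m(-F_{n-1}a+F_nb)[F_n,F_{n-1}]$ (powers of Dehn twists along the curve of slope $F_{n-1}/F_n$, where $F_n=mF_{n-1}-F_{n-2}$). These are \emph{not} Weyl group elements: they strictly decrease $Q$ (Lemma \ref{lem-ineq}) while preserving $s(\cdot)$ exactly (Lemma \ref{lem-sequiv}, proved by locating lattice points via Pick's theorem and using $(s_3s_2s_1\,s([F_n,F_{n-1}]))^m=e$ from Lemma \ref{sFnFn1}), and the induction runs on the value of $Q$, with Lemma \ref{lem-ab} guaranteeing one of the two candidate shears keeps you in $\mathcal P^+$.

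Two secondary problems. First, your equivariance mechanism is miscalibrated: Dehn twists act on slopes by unipotent matrices of determinant $+1$, so they cannot ``realize precisely the maps $r_1$ and $r_2$,'' which have determinant $-1$; the correct statements in this direction are $s([a+jmb,b])=s([a,b])$ (Corollary \ref{cor-a}, the $m$-th power of a twist acting trivially on $s$) and $s(\sigma_1\sigma_2[a,b])=s_3s_2s_1\,s([a,b])\,s_1s_2s_3$ (Lemma \ref{sab}), i.e.\ the Coxeter element acts by conjugation, not trivially. Second, your reduction of an arbitrary non-self-crossing admissible curve to a straight segment of primitive slope ignores that admissible curves are pinned at the vertex along $L_1$ and $L_2$ and may spiral around their endpoints; handling the resulting forms $(s_3s_2s_1)^n s(\nu)(s_1s_2s_3)^n$, $(s_3s_2s_1)^n s_3 s(\nu) s_3 (s_1s_2s_3)^n$, etc., is the content of Lemma \ref{lem-page} and cannot be waved away by quoting torus intersection numbers.
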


Our proof of Theorem \ref{thm-in} shows that the Weyl group  of $\mathcal H(m)$, which is isomorphic to the infinite dihedral group, governs the symmetries of the set of rigid reflections of $W(m)$, and utilizes these symmetries to make an induction argument work on the values of the square norm of $[a,b]$. It is intriguing that such a nice structure dwells in the set of rigid reflections.

The organization of this paper is as follows. In Section \ref{rigid}, we define rigid reflections and rigid roots and provide examples. After introducing notations for rank $2$ Kac--Moody algebras $\mathcal H(m)$, we state the main theorem and illustrate with examples. In particular, the case $m=2$ is completely described. Section \ref{proof} is devoted to a proof of the main theorem.
We first  establish several lemmas, and  a main step is achieved in Proposition \ref{pro-pr} whose proof is an inductive algorithm for, given $[a,b]$ in the positive root lattice, how to find a reduced positive root $[a_0,b_0]$ of $\mathcal H (m)$ with the same rigid reflections, i.e, $s([a_0,b_0])=s([a,b])$. Then Lemma \ref{lem-page} shows that it is enough to consider the positive root lattice, which completes the proof.

\section{Rigid reflections and the main theorem} \label{rigid}
As in the introduction, let $$
W=\langle s_1,s_2,...,s_n \ : \   s_1^2=\cdots=s_n^2=e, \ (s_is_j)^{m_{ij}}=e \rangle
$$ 
be a Coxeter group with $m_{ij}\in\{2,3,4,...\} \cup \{ \infty \}$. In this section, after the rigid reflections in $W$ and rigid roots in the root system of $W$ are defined, the main theorem of this paper will be stated.

\medskip

To begin with, we need a Riemann surface  $\Sigma$ equipped with $n$ labeled curves as below.
Let $G_1$ and $G_2$ be two identical copies of a regular $n$-gon. Label the edges of each of the two  $n$-gons
 by $T_{1}, T_{2}, \dots , T_{n}$ counter-clockwise. On $G_i$ $(i=1,2)$, let $L_i$ be the line segment from the center of $G_i$ to the common endpoint of  $T_{n}$ and $T_{1}$. Later,  these line segments will only be used  to designate the end points of admissible curves and will not be used elsewhere.   Fix the orientation of every edge of $G_1$ (resp.  $G_2$) to be 
 counter-clockwise (resp. clockwise) as in the following picture. 
 \begin{center}
 \begin{tikzpicture}[scale=0.5]
\node at (2.4,-2.2){\tiny{$T_n$}};
\node at (1.5,2.6){\tiny{$T_2$}};
\node at (3.8,0){\tiny{$T_1$}};
\node at (-2.0,-2.7){\tiny{$T_{n-1}$}};
\node at (-2.0,2.5){\tiny{$T_3$}};
\node at (-3.2,0){\vdots};
\draw (0,0) +(30:3cm) -- +(90:3cm) -- +(150:3cm) -- +(210:3cm) --
+(270:3cm) -- +(330:3cm) -- cycle;
\draw [thick] (2.4,-0.2) -- (2.6,0)--(2.8,-0.2);
\draw [thick] (1.4,1.95) -- (1.3,2.25)--(1.6,2.25);  
\draw [thick] (-1.0,2.2) -- (-1.3,2.2)--(-1.2,2.5);  
\draw [thick] (-2.4,0.2) -- (-2.6,0)--(-2.8,0.2);   
\draw [thick] (1.0,-2.2) -- (1.3,-2.2)--(1.2,-2.5);  
\draw [thick] (-1.4,-1.95) -- (-1.3,-2.25)--(-1.6,-2.25);  
\draw [thick] (0,0)--(2.6,-1.5);  
\node at (1.3,-1.1){\tiny{$L_1$}};
\end{tikzpicture}
\begin{tikzpicture}[scale=0.5]
\node at (-1.3,1.2){\tiny{$L_2$}};
\draw [thick] (0,0)--(-2.6,1.5);  
\node at (2.4,-2.2){\tiny{$T_3$}};
\node at (1.5,2.9){\tiny{$T_{n-1}$}};
\node at (3.3,0){\vdots};
\node at (-2.0,-2.7){\tiny{$T_2$}};
\node at (-2.0,2.5){\tiny{$T_n$}};
\draw (0,0) +(30:3cm) -- +(90:3cm) -- +(150:3cm) -- +(210:3cm) --
+(270:3cm) -- +(330:3cm) -- cycle;
\draw [thick] (-2.4,-0.2) -- (-2.6,0)--(-2.8,-0.2);
\draw [thick] (-1.4,1.95) -- (-1.3,2.25)--(-1.6,2.25);  
\draw [thick] (1.0,2.2) -- (1.3,2.2)--(1.2,2.5);  
\draw [thick] (2.4,0.2) -- (2.6,0)--(2.8,0.2);   
\draw [thick] (-1.0,-2.2) -- (-1.3,-2.2)--(-1.2,-2.5);  
\draw [thick] (1.4,-1.95) -- (1.3,-2.25)--(1.6,-2.25);  
\end{tikzpicture}
 \end{center}

 Let $\Sigma$ be the Riemann surface of genus $\lfloor \frac{n-1}{2}\rfloor$
obtained by gluing together the two $n$-gons with all the edges of the same label identified according 
to their orientations.  The edges of the $n$-gons become $n$ different curves in $\Sigma$. If $n$ is odd, all the vertices of the two $n$-gons 
are identified to become one point in $\Sigma$ and the curves obtained from the edges become loops. If $n$ is even, two distinct
 vertices are shared by all curves. Let $\mathcal{T}={T}_1\cup\cdots{T}_n\subset \Sigma$, and $V$ be the set of the vertex (or vertices) on $\mathcal{T}$.  

 Let $\mathfrak W$ be the set of words from the alphabet $\{1,2,...,n\}$, and let $\mathfrak R\subset\mathfrak W$ be the subset of words $\mathfrak w=i_1i_2 \cdots i_k$ such that $k$ is an odd integer and $i_{j}=i_{k+1-j}$ for all $j\in\{1,...,k\}$, in other words, $s_{i_1}s_{i_2} \cdots s_{i_k}$ is a reflection in $W$. For  $\mathfrak{w}=i_1i_2 \cdots i_k\in \mathfrak W$, denote $s_{i_1}...s_{i_k}\in W$ by $s(\mathfrak{w})$.

\begin{definition} 
An \emph{admissible} curve is a continuous function $\eta:[0,1]\longrightarrow \Sigma$ such that

1) $\eta(x)\in V$ if and only if  $x\in\{0,1\}$;

2) $\eta$ starts and ends at the common end point of $T_1$ and $T_n$. More precisely, there exists $\epsilon>0$ such that $\eta([0,\epsilon])\subset L_1$ and $\eta([1-\epsilon,1])\subset L_2$;

3) if $\eta(x)\in \mathcal{T}\setminus V$ then $\eta([x-\epsilon,x+\epsilon])$ meets $\mathcal{T}$ transversally for sufficiently small $\epsilon>0$.
\end{definition}

If $\eta$ is admissible, then we obtain $\upsilon(\eta):={i_1}\cdots {i_k}\in \mathfrak W$  given by 
$$\{x\in(0,1) \ : \ \eta(x)\in \mathcal{T}\}=\{x_1<\cdots<x_k\}\quad \text{ and }\quad \eta(x_\ell)\in T_{i_\ell}\text{ for }\ell\in\{1,...,k\}.$$ 
Conversely, note that for every $\mathfrak w\in \mathfrak W$, there is an admissible curve $\eta$ with $\upsilon(\eta)=\mathfrak{w}$. Hence, every element in $W$   can be represented by some admissible curve(s). For brevity, let $s(\eta):=s(\upsilon(\eta))$.

\begin{definition} \label{def-rr}
An element $s_{i_1}s_{i_2} \cdots s_{i_k}$ in $W$ is called a \emph{rigid reflection} if there exists a non-self-crossing admissible curve $\eta$ with  $\upsilon(\eta)=i_1...i_k\in \mathfrak R$. 
\end{definition}

\begin{example} \label{exa-1}
Let $n=3$, and $W = \langle s_1, s_2, s_3 \ : \ s_1^2=s_2^2=s_3^2 =e \rangle $, i.e., $m_{ij} = \infty$ for $i \neq j$. Consider the universal cover of $\Sigma$ and a curve $\eta$ as in the following picture. 
\begin{center}
\begin{tikzpicture}[scale=0.2mm]
\draw [help lines] (0,0) grid (7,5);
\draw [help lines] (0,1)--(1,0);
\draw [help lines] (0,2)--(2,0);
\draw [help lines] (0,3)--(3,0);
\draw [help lines] (0,4)--(4,0);
\draw [help lines] (0,5)--(5,0);
\draw [help lines] (1,5)--(6,0);
\draw [help lines] (2,5)--(7,0);
\draw [help lines] (3,5)--(7,1);
\draw [help lines] (4,5)--(7,2);
\draw [help lines] (5,5)--(7,3);
\draw [help lines] (6,5)--(7,4);
\draw [thick,red] (1,1)--(1.1,1.1);  
\draw [thick,red] (6,4)--(5.9,3.9);  
\draw [thick,red] (1.36,1.22)--(6-0.36,4-0.22);  
\draw[thick,red,scale=0.5,domain=0.75:13.12,smooth,variable=\t]
plot ({(0.2+0.05*\t)*cos(\t r)+2},{(0.2+0.05*\t)*sin(\t r)+2});
\draw[thick,red,scale=0.5,domain=3.89:16.26,smooth,variable=\t]
plot ({(0.03+0.05*\t)*cos(\t r)+12},{(0.03+0.05*\t)*sin(\t r)+8});
\end{tikzpicture}
\end{center}
Here each horizontal line segment represents $T_1$, vertical $T_3$, and diagonal $T_2$.
One sees that $\eta$ has no self-intersection in $\Sigma$. Thus we obtain the corresponding rigid reflection  
\[ s(\eta) = (s_3s_2s_1)^4s_2s_3s_2s_1s_2s_3s_2s_3s_2s_1s_2s_3s_2(s_1s_2s_3)^4 . \]

On the other hand, the reflection $s_2s_3s_1s_3s_2$ comes from the following curve $\eta'$ which has a self-intersection.  The picture on the right shows several copies of $\eta'$ on the universal cover.

 \begin{center}
 \begin{tikzpicture}[scale=0.5]
 \node at (2.3,0){\tiny{$1$}};
\node at (-1.2,-1.7){\tiny{$3$}};
\node at (-1.2,1.5){\tiny{$2$}};
\draw (0,0) +(60:3cm) -- +(180:3cm) -- +(300:3cm) -- cycle;
\draw [thick] (0,0) -- (1.5,-2.55); 
\draw [red, thick] (0,1.7)
[rounded corners] --(1,-1.7) -- (1.5,-2.55); 
\draw [red, thick] (-1.4,0.95)
[rounded corners] --(-1.1,0) -- (-1.4,-0.95); 
\draw [red, thick] (0,-1.7)
[rounded corners] --(0.75,-0.4) -- (1.5,0); 
\end{tikzpicture}
\begin{tikzpicture}[scale=0.5]
\draw [red, thick] (0,-1.7)
[rounded corners] --(-1,1.7) -- (-1.5,2.55); 
\draw [red, thick] (1.4,-0.95)
[rounded corners] --(1.1,0) -- (1.4,0.95); 
\draw [red, thick] (0,1.7)
[rounded corners] --(-0.75,0.4) -- (-1.5,0); 
\draw [thick] (0,0) -- (-1.5,2.55); 
\node at (1.2,1.7){\tiny{$3$}};
\node at (1.2,-1.5){\tiny{$2$}};
\draw (0,0) +(0:3cm) -- +(120:3cm) -- +(240:3cm) -- cycle;
\end{tikzpicture}
\qquad
\begin{tikzpicture}[scale=0.25mm]
\draw [help lines] (0,0) grid (5,4);
\draw [help lines] (0,1)--(1,0);
\draw [help lines] (0,2)--(2,0);
\draw [help lines] (0,3)--(3,0);
\draw [help lines] (0,4)--(4,0);
\draw [help lines] (1,4)--(5,0);
\draw [help lines] (2,4)--(5,1);
\draw [help lines] (3,4)--(5,2);
\draw [help lines] (4,4)--(5,3);
\draw [red, thick] (2-1,2) 
[rounded corners] -- (2.7-1,2.3) -- (3-1,2.3) -- (3.5-1,2) -- (4-1,1.7) -- (4.3-1,1.7)--(5-1,2);
\draw [red, thick] (2,2-1) 
[rounded corners] -- (2.7,2.3-1) -- (3,2.3-1) -- (3.5,2-1) -- (4,1.7-1) -- (4.3,1.7-1)--(5,2-1);
\draw [red, thick] (2-1,2-1) 
[rounded corners] -- (2.7-1,2.3-1) -- (3-1,2.3-1) -- (3.5-1,2-1) -- (4-1,1.7-1) -- (4.3-1,1.7-1)--(5-1,2-1);
\draw [red, thick] (2-2,2-1) 
[rounded corners] -- (2.7-2,2.3-1) -- (3-2,2.3-1) -- (3.5-2,2-1) -- (4-2,1.7-1) -- (4.3-2,1.7-1)--(5-2,2-1);
\draw [red, thick] (2,2+1) 
[rounded corners] -- (2.7,2.3+1) -- (3,2.3+1) -- (3.5,2+1) -- (4,1.7+1) -- (4.3,1.7+1)--(5,2+1);
\draw [red, thick] (2-1,2+1) 
[rounded corners] -- (2.7-1,2.3+1) -- (3-1,2.3+1) -- (3.5-1,2+1) -- (4-1,1.7+1) -- (4.3-1,1.7+1)--(5-1,2+1);
\draw [red, thick] (2-2,2+1) 
[rounded corners] -- (2.7-2,2.3+1) -- (3-2,2.3+1) -- (3.5-2,2+1) -- (4-2,1.7+1) -- (4.3-2,1.7+1)--(5-2,2+1);
\end{tikzpicture}
 \end{center}
Consequently, the reflection $s(\eta')=s_2s_3s_1s_3s_2$ is {\em not} rigid.

\end{example}

\begin{example}
Let $n=8$, and we have a rigid reflection  $$(s_8s_7\cdots s_2s_1)^5(s_8s_7\cdots s_2)s_1(s_2\cdots s_7s_8)(s_1s_2\cdots s_7s_8)^5,$$ which corresponds to the following non-self-intersecting curve on $\Sigma$. 

\begin{center}
\begin{tikzpicture}[scale=0.5]
\node at (0,-3.2){\tiny{$7$}};
\node at (2.4,-1.9){\tiny{$8$}};
\node at (2.3,2.1){\tiny{$2$}};
\node at (3.3,0){\tiny{$1$}};
\node at (0,3.2){\tiny{$3$}};
\node at (-2.0,-2.4){\tiny{$6$}};
\node at (-2.0,2.5){\tiny{$4$}};
\node at (-3.2,0){\tiny{$5$}};
\draw (0,0) +(22.5:3cm) -- +(67.5:3cm) -- +(112.5:3cm) -- +(157.5:3cm) --
+(202.5:3cm) -- +(247.5:3cm) -- +(292.5:3cm) -- +(337.5:3cm) -- cycle;
\draw [red, thick] (0.95,2.77) arc (180:305:0.4cm);
\draw [red, thick] (0.85,2.77) arc (180:305:0.5cm);
\draw [red, thick] (0.75,2.77) arc (180:305:0.6cm);
\draw [red, thick] (0.65,2.77) arc (180:305:0.7cm);
\draw [red, thick] (0.55,2.77) arc (180:305:0.8cm);
\draw [red, thick] (0.45,2.77) arc (180:305:0.9cm);
\draw [red, thick] (-0.95,2.77) arc (360:235:0.4cm);
\draw [red, thick] (-0.85,2.77) arc (360:235:0.5cm);
\draw [red, thick] (-0.75,2.77) arc (360:235:0.6cm);
\draw [red, thick] (-0.65,2.77) arc (360:235:0.7cm);
\draw [red, thick] (-0.55,2.77) arc (360:235:0.8cm);
\draw [red, thick] (-0.45,2.77) arc (360:235:0.9cm);
\draw [red, thick] (0.95,-2.77) arc (180:55:0.4cm);
\draw [red, thick] (0.85,-2.77) arc (180:55:0.5cm);
\draw [red, thick] (0.75,-2.77) arc (180:55:0.6cm);
\draw [red, thick] (0.65,-2.77) arc (180:55:0.7cm);
\draw [red, thick] (0.55,-2.77) arc (180:55:0.8cm);
\draw [red, thick] (0.45,-2.77) arc (180:55:0.9cm);
\draw [red, thick] (-0.95,-2.77) arc (0:125:0.4cm);
\draw [red, thick] (-0.85,-2.77) arc (0:125:0.5cm);
\draw [red, thick] (-0.75,-2.77) arc (0:125:0.6cm);
\draw [red, thick] (-0.65,-2.77) arc (0:125:0.7cm);
\draw [red, thick] (-0.55,-2.77) arc (0:125:0.8cm);
\draw [red, thick] (-0.45,-2.77) arc (0:125:0.9cm);
\draw [red, thick] (-2.77,0.95) arc (270:395:0.4cm);
\draw [red, thick] (-2.77,0.85) arc (270:395:0.5cm);
\draw [red, thick] (-2.77,0.75) arc (270:395:0.6cm);
\draw [red, thick] (-2.77,0.65) arc (270:395:0.7cm);
\draw [red, thick] (-2.77,0.55) arc (270:395:0.8cm);
\draw [red, thick] (-2.77,0.45) arc (270:395:0.9cm);
\draw [red, thick] (-2.77,-0.95) arc (90:-35:0.4cm);
\draw [red, thick] (-2.77,-0.85) arc (90:-35:0.5cm);
\draw [red, thick] (-2.77,-0.75) arc (90:-35:0.6cm);
\draw [red, thick] (-2.77,-0.65) arc (90:-35:0.7cm);
\draw [red, thick] (-2.77,-0.55) arc (90:-35:0.8cm);
\draw [red, thick] (-2.77,-0.45) arc (90:-35:0.9cm);
\draw [red, thick] (2.45,-1.5) arc (215:70:0.25cm);    
\draw [red, thick] (2.77,-0.85) arc (90:215:0.5cm);
\draw [red, thick] (2.77,-0.75) arc (90:215:0.6cm);
\draw [red, thick] (2.77,-0.65) arc (90:215:0.7cm);
\draw [red, thick] (2.77,-0.55) arc (90:215:0.8cm);
\draw [red, thick] (2.77,-0.45) arc (90:215:0.9cm);
\draw [red, thick] (2.77,0.85) arc (270:145:0.5cm);
\draw [red, thick] (2.77,0.75) arc (270:145:0.6cm);
\draw [red, thick] (2.77,0.65) arc (270:145:0.7cm);
\draw [red, thick] (2.77,0.55) arc (270:145:0.8cm);
\draw [red, thick] (2.77,0.45) arc (270:145:0.9cm);
\draw [red, thick] (2.77,0) arc (265:141:1.2cm);
\end{tikzpicture}
\begin{tikzpicture}[scale=0.5]
\draw [red, thick] (-2.77,0) arc (85:-39:1.2cm);
\draw [red, thick] (-2.45,1.5) arc (35:-110:0.25cm);   
\draw [red, thick] (-2.77,0.85) arc (270:395:0.5cm);
\draw [red, thick] (-2.77,0.75) arc (270:395:0.6cm);
\draw [red, thick] (-2.77,0.65) arc (270:395:0.7cm);
\draw [red, thick] (-2.77,0.55) arc (270:395:0.8cm);
\draw [red, thick] (-2.77,0.45) arc (270:395:0.9cm);
\draw [red, thick] (-2.77,-0.85) arc (90:-35:0.5cm);
\draw [red, thick] (-2.77,-0.75) arc (90:-35:0.6cm);
\draw [red, thick] (-2.77,-0.65) arc (90:-35:0.7cm);
\draw [red, thick] (-2.77,-0.55) arc (90:-35:0.8cm);
\draw [red, thick] (-2.77,-0.45) arc (90:-35:0.9cm);
\draw [red, thick] (2.77,-0.95) arc (90:215:0.4cm);
\draw [red, thick] (2.77,-0.85) arc (90:215:0.5cm);
\draw [red, thick] (2.77,-0.75) arc (90:215:0.6cm);
\draw [red, thick] (2.77,-0.65) arc (90:215:0.7cm);
\draw [red, thick] (2.77,-0.55) arc (90:215:0.8cm);
\draw [red, thick] (2.77,-0.45) arc (90:215:0.9cm);
\draw [red, thick] (2.77,0.95) arc (270:145:0.4cm);
\draw [red, thick] (2.77,0.85) arc (270:145:0.5cm);
\draw [red, thick] (2.77,0.75) arc (270:145:0.6cm);
\draw [red, thick] (2.77,0.65) arc (270:145:0.7cm);
\draw [red, thick] (2.77,0.55) arc (270:145:0.8cm);
\draw [red, thick] (2.77,0.45) arc (270:145:0.9cm);
\draw [red, thick] (0.95,2.77) arc (180:305:0.4cm);
\draw [red, thick] (0.85,2.77) arc (180:305:0.5cm);
\draw [red, thick] (0.75,2.77) arc (180:305:0.6cm);
\draw [red, thick] (0.65,2.77) arc (180:305:0.7cm);
\draw [red, thick] (0.55,2.77) arc (180:305:0.8cm);
\draw [red, thick] (0.45,2.77) arc (180:305:0.9cm);
\draw [red, thick] (-0.95,2.77) arc (360:235:0.4cm);
\draw [red, thick] (-0.85,2.77) arc (360:235:0.5cm);
\draw [red, thick] (-0.75,2.77) arc (360:235:0.6cm);
\draw [red, thick] (-0.65,2.77) arc (360:235:0.7cm);
\draw [red, thick] (-0.55,2.77) arc (360:235:0.8cm);
\draw [red, thick] (-0.45,2.77) arc (360:235:0.9cm);
\draw [red, thick] (0.95,-2.77) arc (180:55:0.4cm);
\draw [red, thick] (0.85,-2.77) arc (180:55:0.5cm);
\draw [red, thick] (0.75,-2.77) arc (180:55:0.6cm);
\draw [red, thick] (0.65,-2.77) arc (180:55:0.7cm);
\draw [red, thick] (0.55,-2.77) arc (180:55:0.8cm);
\draw [red, thick] (0.45,-2.77) arc (180:55:0.9cm);
\draw [red, thick] (-0.95,-2.77) arc (0:125:0.4cm);
\draw [red, thick] (-0.85,-2.77) arc (0:125:0.5cm);
\draw [red, thick] (-0.75,-2.77) arc (0:125:0.6cm);
\draw [red, thick] (-0.65,-2.77) arc (0:125:0.7cm);
\draw [red, thick] (-0.55,-2.77) arc (0:125:0.8cm);
\draw [red, thick] (-0.45,-2.77) arc (0:125:0.9cm);
\node at (0,3.2){\tiny{$7$}};
\node at (-2.4,1.9){\tiny{$8$}};
\node at (-2.3,-2.1){\tiny{$2$}};
\node at (0,-3.2){\tiny{$3$}};
\node at (2.0,2.4){\tiny{$6$}};
\node at (2.0,-2.5){\tiny{$4$}};
\node at (3.2,0){\tiny{$5$}};
\draw (0,0) +(22.5:3cm) -- +(67.5:3cm) -- +(112.5:3cm) -- +(157.5:3cm) --
+(202.5:3cm) -- +(247.5:3cm) -- +(292.5:3cm) -- +(337.5:3cm) -- cycle;
\end{tikzpicture}
\end{center}
\end{example}

Let $\Phi$ be the root system of $W$, realized in the real vector space $\mathbf E$ with basis $\{\alpha_1, \dots , \alpha_n \}$ with the symmetric bilinear form $B$ defined by
\[ B(\alpha_i, \alpha_j)= - \cos (\pi/m_{ij})  \text{ for } 1 \le i,j \le n.\] For each $i \in \{ 1, \dots , n \}$, define the action of $s_i$ on $\mathbf E$ by 
\[ s_i (\lambda) = \lambda -2B(\lambda, \alpha_i) \alpha_i , \quad \lambda \in \mathbf E, \]
and extend it to the action of $W$ on $\mathbf E$. Then each root $\alpha \in \Phi$ determines a reflection $s_\alpha \in W$.  (See \cite{Hu} for more details.)

\begin{definition} \label{def-rroot}
A positive root $\alpha \in \Phi$ of $W$ is called {\em rigid} if the corresponding reflection $s_\alpha \in W$ is rigid. 
\end{definition}

\begin{example}
In Example \ref{exa-1}, we obtained the rigid reflection $$(s_3s_2s_1)^4s_2s_3s_2s_1s_2s_3s_2s_3s_2s_1s_2s_3s_2(s_1s_2s_3)^4.$$ It give rises to a rigid root  
\begin{equation*} 1662490\alpha_1+4352663\alpha_2+11395212\alpha_3=(s_3s_2s_1)^4s_2s_3s_2s_1s_2s_3\alpha_2. \end{equation*}
\end{example}

 Fix a positive integer $m\geq 2$. As in the introduction, we set
 $$W(m)=\langle s_1,s_2,s_3 \ : \   s_1^2=s_2^2=s_3^2=(s_1s_2)^m=(s_2s_3)^m=e \rangle.
 $$ Note that we put, in particular, $m_{13}=m_{31}=\infty$.
Let  $\mathcal H(m)$ be the rank 2 hyperbolic Kac--Moody algebra associated with the Cartan matrix ${\tiny \begin{pmatrix}  2&-m\\-m&2\end{pmatrix}}$. We denote an element of the root lattice of $\mathcal H(m)$ by $[a,b]$, $a, b \in \mathbb Z$, where $[1,0]$ and $[0,1]$ are the positive simple roots. 
A root $[a,b]$ of $\mathcal H(m)$ is called {\em reduced} if $\gcd(a,b)=1$ and $ab\neq 0$. 
One can see that every non-simple real root is reduced.

Let $\mathcal P^+=\{ [a,b]\,:\, a, b \in \mathbb Z_{> 0}, \ \gcd(a,b)=1 \}$.
For every $[a,b] \in \mathcal P^+$, let $\eta([a,b])$ be the line segment  from $(0,0)$ to $(a,b)$ on the universal cover of the torus, which has no self-intersections.   Write   
$s([a,b]):=s(\eta([a,b]))\in W(m)$ for the corresponding rigid reflection.
For example, we have $s([5,3])=s_2s_3s_2s_1s_2s_3s_2s_3s_2s_1s_2s_3s_2$ as one can check in the following picture.

\begin{center}
\begin{tikzpicture}[scale=0.2mm]
\draw [help lines] (0,0) grid (5,3);
\draw [help lines] (0,1)--(1,0);
\draw [help lines] (0,2)--(2,0);
\draw [help lines] (0,3)--(3,0);
\draw [help lines] (1,3)--(4,0);
\draw [help lines] (2,3)--(5,0);
\draw [help lines] (3,3)--(5,1);
\draw [help lines] (4,3)--(5,2);
\draw [thick,red] (0,0)--(5,3);  
\end{tikzpicture}
\end{center}

With these definitions, we now state the main theorem of this paper.

\begin{thm} \label{thm-main}
The function, $[a,b] \mapsto s([a,b])$, is an onto function from the set of reduced positive roots of $\mathcal H(m)$ to  the set of rigid reflections of $W(m)$. 
\end{thm}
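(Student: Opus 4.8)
The plan is to prove surjectivity in two stages: first identify the set of rigid reflections with $\{s([a,b]) : [a,b]\in\mathcal P^+\}$, and then show that each such $[a,b]$ can be replaced, without changing the reflection it determines, by a genuine reduced positive root of $\mathcal H(m)$.

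First I would reduce to $\mathcal P^+$. Since $\upsilon(\eta)$ is read off from the transverse intersections of $\eta$ with $\mathcal T$, the element $s(\eta)$ depends only on the isotopy class of the non-self-crossing admissible curve $\eta$ relative to $\mathcal T$. On the torus $\Sigma$ every simple admissible arc from the vertex to itself is isotopic, through admissible curves, to a straight segment $\eta([a,b])$ for a unique $[a,b]\in\mathcal P^+$; the central symmetry of this segment about its midpoint maps the grid to itself and reverses the segment, so $\upsilon([a,b])$ is a palindrome of odd length $2(a+b)-3$, whence $\upsilon([a,b])\in\mathfrak R$ and $s([a,b])$ is always a reflection. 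Thus the set of rigid reflections is exactly $\{s([a,b]):[a,b]\in\mathcal P^+\}$. Because $s_\alpha=s_{-\alpha}$, the negative reduced roots contribute the same reflections as the positive ones, so it suffices to realize every $s([a,b])$ by a reduced \emph{positive} root. I would isolate this reduction as Lemma \ref{lem-page}.

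Next I would set up the combinatorics and the symmetries. Regard $\upsilon([a,b])$ as the fixed cutting sequence of the segment of slope $b/a$ against the horizontals $T_1$, verticals $T_3$, and diagonals $T_2$, and regard $s([a,b])$ as the image of this word in $W(m)$, where the only relations available are $(s_1s_2)^m=(s_2s_3)^m=e$ (with $s_1,s_3$ free). I would then record the action of the Weyl group $W(\mathcal H(m))\cong D_\infty$ on the root lattice, namely $r_1[a,b]=[mb-a,b]$ and $r_2[a,b]=[a,ma-b]$, and realize $r_1,r_2$ as symmetries of the pair $(\Sigma,\mathcal T)$, built from the linear $GL_2(\mathbb Z)$ action together with a relabeling of $s_1,s_2,s_3$, which induce automorphisms of $W(m)$ carrying rigid reflections to rigid reflections and satisfy a transformation rule relating $s(r_i[a,b])$ to $s([a,b])$. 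This makes precise the sense in which the Weyl group of $\mathcal H(m)$ governs the symmetries of the rigid reflections.

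The engine is then an induction on the square norm $q(a,b)=a^2-mab+b^2$, which up to a factor of $2$ is the value of the invariant form of $\mathcal H(m)$ on $[a,b]$: real roots have $q=1$ and imaginary roots have $q\le 0$. If $q(a,b)\le 1$, then $[a,b]$ is already a reduced positive root and nothing is needed. If $q(a,b)\ge 2$, the segment lies strictly outside the imaginary cone, and I would show that its cutting sequence contains an alternating block that becomes collapsible in $W(m)$ through one of the relations $(s_1s_2)^m$ or $(s_2s_3)^m$; collapsing it corresponds geometrically to replacing $[a,b]$ by some $[a_0,b_0]\in\mathcal P^+$ with $a_0,b_0>0$, with strictly smaller $q$, and with $s([a_0,b_0])=s([a,b])$. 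Iterating terminates at a reduced positive root, which is the statement of Proposition \ref{pro-pr} and yields the surjectivity of Theorem \ref{thm-main}. The $D_\infty$-symmetry of the previous step is used here to normalize $[a,b]$ into a standard range of slopes before collapsing, so that each reduction simultaneously decreases $q$, preserves the element of $W(m)$, and keeps both coordinates positive.

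I expect the decisive obstacle to be exactly this reduction step: matching the continued-fraction (Stern--Brocot) structure of the cutting sequence $\upsilon([a,b])$ to the two Coxeter relations of $W(m)$, so that a collapse in the group corresponds to a controlled, $q$-decreasing change of slope, and in particular ruling out the possibility that some primitive positive vector with $q\ge 2$ already has a reduced word in $W(m)$ — for such a vector would give a rigid reflection missed by every root and would defeat surjectivity. Proving that this never happens is the crux, and it forces one to track at once the arithmetic of $q$ under $r_1,r_2$ and the precise lengths of the alternating blocks $s_is_js_i\cdots$ appearing in $\upsilon([a,b])$ relative to the threshold $m$.
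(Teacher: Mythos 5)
Your two-stage architecture --- first show every rigid reflection equals $s([a,b])$ for some $[a,b]\in\mathcal P^+$, then reduce such an $[a,b]$ to a reduced positive root by induction on $Q([a,b])=a^2+b^2-mab$ --- is exactly the paper's (Lemma \ref{lem-page} followed by Proposition \ref{pro-pr}), and you have correctly located the crux. But the crux is precisely what the proposal does not supply, and the sketch you give of it would not work as stated. The paper's $Q$-decreasing move is not the collapse of a literal alternating block $(s_1s_2)^m$ or $(s_2s_3)^m$ in the cutting sequence after a $D_\infty$-normalization; it is the shear $[a,b]\mapsto[a,b]\mp m\kappa\,[F_n,F_{n-1}]$ along the convergent direction $[F_n,F_{n-1}]$ (where $F_n=mF_{n-1}-F_{n-2}$ and $n$ is chosen so that $F_{n-1}/F_n<b/a<F_n/F_{n+1}$), and the relation that makes this move invisible to $s(\cdot)$ is the \emph{conjugated} Coxeter relation $(s_3s_2s_1\,s([F_n,F_{n-1}]))^m=e$ of Lemma \ref{sFnFn1}\,(b). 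Two things go wrong with your local-collapse picture. First, deleting one occurrence of $(s_is_j)^m$ from the middle of $\upsilon([a,b])$ gives an equal element of $W(m)$ but generally not the cutting sequence of any straight segment, so there is no $[a_0,b_0]$ with which to continue the induction; the paper must collapse all $m\binom{\kappa}{2}$ interior lattice points' worth of relations coherently (the curves $M_i^{\pm}$, Pick's theorem, and the number-theoretic argument locating the points $P_{i,j}$ in Lemma \ref{lem-sequiv}) precisely so that the outcome is again $\upsilon([a',b'])$. Second, the individual Weyl reflections $r_1,r_2$ preserve $Q$, and only the translation $\sigma_1\sigma_2$ is shown to intertwine with $s(\cdot)$ --- and it does so by conjugation by $s_3s_2s_1$ (Lemma \ref{sab}\,(3)), hence it \emph{changes} the reflection; so a $D_\infty$-normalization must be undone and cannot by itself place you where a single literal relation applies. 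On top of this one still needs Lemma \ref{lem-ab} (one of the two candidate shears lands back in $\mathcal P^+$) and Lemma \ref{lem-ineq} (the shear strictly decreases $Q$), neither of which is argued in the proposal.

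There is also a smaller but genuine gap in your first stage: a non-self-crossing admissible curve is \emph{not} isotopic through admissible curves to a straight segment, because admissibility pins the ends to $L_1$ and $L_2$ and the curve may spiral around the base vertex. The paper's Lemma \ref{lem-page} therefore has to treat the forms $(s_3s_2s_1)^ns(\nu)(s_1s_2s_3)^n$, $(s_3s_2s_1)^ns_3s(\nu)s_3(s_1s_2s_3)^n$ and $(s_3s_2s_1)^ns_3s_2s(\nu)s_2s_3(s_1s_2s_3)^n$ separately, using Corollary \ref{cor-a} and Lemma \ref{sab}\,(3) to fold the spirals and a possibly negative abscissa of the endpoint back into some $[a,b]\in\mathcal P^+$. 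Your palindrome observation is correct but does not dispose of this case analysis.
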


Equivalently, if we let $\beta([a,b])$ be the rigid root determined by the rigid reflection $s([a,b])$, then the above theorem asserts that
 the function, $[a,b] \mapsto \beta([a,b])$, is an onto function from the set of reduced positive roots of $\mathcal H(m)$ to  the set of rigid roots of $W(m)$.

\medskip

A proof of Theorem \ref{thm-main} will be given in the next section. In the rest of this section we will present some examples.
Recall from \cite{Kac} that
\begin{equation} \label{eqn-root}
\text{
$[a,b]$ is a root of $\mathcal H(m)$ \quad if and only if \quad $a^2+b^2-mab \le 1$. }
\end{equation}
We will use this fact in the following example  without further mentioning it. 

\begin{example} \hfill

(1) Let $m=3$. Consider the rigid reflection $s([4,1])= s_2s_3s_2s_3s_2s_3s_2= s_2$ and its rigid root $\beta([4,1])=\alpha_2$. The point $[4,1]$ is not a root of $\mathcal H(3)$.  However, these are covered by the root  $[1,1]$ of $\mathcal H(3)$   since $s([1,1])=s_2$ and $\beta([1,1])=\alpha_2$.

One can also check 
$s([30,11])=s_2s_3s_2s_1s_2s_3s_2 =s([3,2])$ and $\beta([30,11])= \alpha_1+3\alpha_2 +3\alpha_3 =\beta([3,2])$.
Here $[30,11]$ is not a root of $\mathcal H(3)$, whereas $[3,2]$ is.

(2) Now let $m=4$. Then we have 
\begin{align*} s([5,2])&=s([13,2]) = s_2s_3 s_2s_3 s_2s_1s_2s_3 s_2s_3s_2 \ \text{ and } \\ \beta([5,2])&=\beta([13,2]) = \alpha_1 + 3 \sqrt 2 \alpha_2 + 6 \alpha_3 .\end{align*}
Here $[13,2]$ is not a root of $\mathcal H(4)$, but $[5,2]$ is a root.

(3) For a general $m$, let $x= 2 \cos(\pi/m)$.  
Then $s([5,3])=s_2s_3s_2s_1s_2s_3s_2s_3s_2s_1s_2s_3s_2$
and \[ \beta([5,3]) = (x^3 + x) \alpha_1+(x^6 + 3x^4 + 2x^2 - 1)\alpha_2+(x^5 + 3x^3 + 2x)\alpha_3 .\]

\end{example}

\begin{example} \label{exm-2}
Assume that $m=2$. Then the Kac--Moody algebra $\mathcal H(2)$ is the affine Lie algebra $\widehat{\mathfrak{sl}}_2$ and its set of reduced positive roots is given by 
\[ \{ [n,n+1], \ [n+1,n], \ [1,1] \ : \ n \ge 1 \}. \] On the other hand, since $s_2$ commutes with $s_1$ and $s_3$ in $W(2)$, we see that the set of rigid reflections in $W(2)$ is
\[ \{ s_1(s_3s_1)^{n-1}, \ s_3(s_1s_3)^{n-1}, \ s_2 \ : \ n \ge 1 \} , \] 
and that the set of rigid roots of $W(2)$ is
\[ \{ n \alpha_1+ (n-1) \alpha_3, \ (n-1) \alpha_1+ n \alpha_3, \ \alpha_2, \ :\ n \ge 1 \} . \] Applying the maps $s(\cdot)$ and $\beta(\cdot)$ to the set of reduced positive roots, we obtain, for $n \ge 1$,  \begin{align*} s([n,n+1]) &= s_1(s_3s_1)^{n-1} ,  & s([n+1,1])&= s_3(s_1s_3)^{n-1} , & s([1,1]) &= s_2,  \\ \beta([n,n+1]) &= n \alpha_1+ (n-1) \alpha_3 ,  & \beta([n+1,1])&= (n-1) \alpha_1+ n \alpha_3 ,  & \beta([1,1]) &= \alpha_2 . \end{align*}
Therefore, the maps are clearly bijections, and Conjecture \ref{conj} is verified in this case $m=2$.

\end{example}

\section{Proof} \label{proof}

In this section, we prove Theorem \ref{thm-main}. The last lemma (Lemma \ref{lem-page}) enables us to focus only on the positive root lattice $\mathcal P^+$. Lemma \ref{lem-sequiv} shows that we can use a certain transformation  to preserve rigid roots. Lemmas \ref{lem-ab} and \ref{lem-ineq} guarantee this transformation to work inductively, and the inductive algorithm is given in Proposition \ref{pro-pr}. We explain the algorithm in Example \ref{exm-3}.

\medskip

Define a sequence $F_n$ recursively by $F_0=0, F_1=1$, and $F_n=mF_{n-1}-F_{n-2}$.
Define another sequence $E_n$ by $E_0=E_1=1$ and $E_n=mE_{n-1}-E_{n-2}$.

\begin{lem} \label{lem-ab}
Assume that $[a,b]\in \mathcal P^+$ is not a root of $\mathcal H(m)$, and that \[ \frac {F_{n-1}}{F_{n}} < \frac b a < \frac {F_{n}}{F_{n+1}} .\]
We have either 
\begin{align*}
& [a,b]-m(-F_{n-1}a + F_{n}b)[F_{n}, F_{n-1}]  \in \mathcal P^+, \quad \text{ or } \\ & [a,b]+m(-F_{n}a + F_{n+1}b)[F_{n+1}, F_{n}]  \in \mathcal P^+ . 
\end{align*}
\end{lem}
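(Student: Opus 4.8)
The plan is to re-express $[a,b]$ in the basis of the two consecutive real roots $[F_{n+1},F_n]$ and $[F_n,F_{n-1}]$ that bracket its slope, so that the two candidate points become elementary shears in these coordinates and their membership in $\mathcal{P}^+$ can be read off directly.

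First I would record the arithmetic of the $F_k$. From $F_{k+1}=mF_k-F_{k-1}$ and $m\ge2$ one gets $0<F_1<F_2<\cdots$, together with the Cassini-type identity $F_k^2-F_{k-1}F_{k+1}=1$ (true at $k=1$ and preserved by the recurrence). Writing $\rho_k=F_{k-1}/F_k$, this identity gives the crucial monotonicity $\rho_k<\rho_{k+1}$ for every $k\ge1$, and the hypothesis becomes simply $\rho_n<b/a<\rho_{n+1}$. Now put
\[ u=F_nb-F_{n-1}a,\qquad v=F_na-F_{n+1}b, \]
which are exactly (up to sign) the integer coefficients $-F_{n-1}a+F_nb=u$ and $-F_na+F_{n+1}b=-v$ occurring in the statement. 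Inverting by Cassini gives $a=F_{n+1}u+F_nv$ and $b=F_nu+F_{n-1}v$, i.e. $[a,b]=u\,[F_{n+1},F_n]+v\,[F_n,F_{n-1}]$; the change-of-basis matrix $\mat{F_{n+1}}{F_n}{F_n}{F_{n-1}}$ is unimodular (determinant $-1$). The two strict inequalities in the hypothesis are precisely $u>0$ and $v>0$, so $u,v\ge1$.

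In these coordinates the candidates are shears: subtracting $mu\,[F_n,F_{n-1}]$ turns $(u,v)$ into $(u,v-mu)$, and adding $m(-v)[F_{n+1},F_n]$ turns it into $(u-mv,v)$. Converting back with the recurrence, the first candidate is $[-F_{n-1}u+F_nv,\,-F_{n-2}u+F_{n-1}v]$ and the second is $[F_{n+1}u-F_{n+2}v,\,F_nu-F_{n+1}v]$. Both maps $(u,v)\mapsto(u,v-mu)$ and $(u,v)\mapsto(u-mv,v)$ preserve $\gcd$, and $\gcd(u,v)=\gcd(a,b)=1$ because $P$ is unimodular; hence both candidates already have coprime coordinates, and membership in $\mathcal{P}^+$ is reduced to positivity.

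For positivity, the first candidate lies in the open first quadrant iff $F_nv>F_{n-1}u$ and $F_{n-1}v>F_{n-2}u$; since $\rho_{n-1}<\rho_n$ the binding constraint is $v/u>\rho_n$ (for $n=1$ both inequalities are automatic). Symmetrically, the second candidate is positive iff $v/u<\rho_{n+1}$. Because $\rho_n<\rho_{n+1}$, every real number $v/u$ satisfies at least one of these two conditions, so at least one candidate lies in $\mathcal{P}^+$. The whole argument is short once the coordinates are in place; the only place to be careful is the direction of the monotonicity $\rho_k<\rho_{k+1}$, since a sign slip there would create a spurious nonempty interval of ratios $v/u$ for which neither candidate is positive. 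I would also note that the hypothesis that $[a,b]$ is not a root is not actually used: in these coordinates $a^2+b^2-mab=u^2+muv+v^2\ge m+2$, so the interval hypothesis already forces $[a,b]$ to be a non-root, and the hypothesis is kept only because that is how Proposition \ref{pro-pr} applies the lemma.
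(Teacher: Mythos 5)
Your proof is correct, and it takes a genuinely different route from the paper's. Both arguments share the same skeleton --- unimodularity gives coprimality of the two candidates, and the real work is showing that they cannot both fail positivity --- but you handle the positivity step by first rewriting $[a,b]=u\,[F_{n+1},F_n]+v\,[F_n,F_{n-1}]$ with $u=-F_{n-1}a+F_nb>0$, $v=F_na-F_{n+1}b>0$, so that the candidates become the shears $(u,v-mu)$ and $(u-mv,v)$ and positivity reduces to $v/u>F_{n-1}/F_n$ versus $v/u<F_n/F_{n+1}$, which cover all of $\RR$ by the Cassini identity $F_k^2-F_{k-1}F_{k+1}=1$. The paper instead keeps everything in the $(a,b)$-coordinates, writes the candidates as explicit unimodular matrices applied to $[a,b]$, and runs a case analysis (first coordinate negative, then second coordinate negative) verifying four polynomial inequalities in $F_{n-1},F_n,F_{n+1}$ from the identities $mF_n=F_{n-1}+F_{n+1}$ and $F_n^2=1+F_{n-1}F_{n+1}$. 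Your change of basis buys several things: the positivity dichotomy becomes a one-line statement about an interval of slopes rather than a computation; the borderline cases (a candidate coordinate equal to $0$, e.g.\ $(u,v)=(F_n,F_{n-1})$), which the paper's strict-inequality case split glosses over, are handled uniformly; and you get for free that $Q([a,b])=u^2+muv+v^2\ge m+2$, so the ``not a root'' hypothesis is indeed redundant --- a point worth recording, since the same coordinates also make Lemma~\ref{lem-ineq} transparent ($Q$ drops to $u^2-muv+v^2$ under either shear with $j=1$). Your two checkpoints are verified: the monotonicity $F_{k-1}/F_k<F_k/F_{k+1}$ has the right direction, and the $n=1$ case (where $F_{-1}=-1$ makes the first candidate $[v,u]$) is consistent with your general criterion since $\rho_1=0$.
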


\begin{proof}
Let $u=F_{n-1}, v=F_{n}$ and $w=F_{n+1}$ for convenience. Then we have
\[ mv=u+w \quad \text{ and } \quad v^2=1+uw. \]
We want to show
\begin{align*} [c,d] &:= [(1+muv)a -mv^2b, mu^2a+(1-muv)b ] \in \mathcal P^+, \quad \text{ or } \\ [e,f]  & :=[(1-mvw)a+ mw^2b,  -mv^2a+( 1+mvw)b] \in \mathcal P^+ .\end{align*}
Note that the matrices
\[ \begin{pmatrix} 1+muv & -mv^2 \\ mu^2 & 1-muv \end{pmatrix} \quad \text{ and } \quad \begin{pmatrix} 1-mvw & mw^2 \\ -mv^2 & 1+mvw \end{pmatrix} \]
have determinant $1$. Since $\gcd(a,b)=1$, we have $\gcd(c,d)=1$ and $\gcd(e,f)=1$.

Assume that $c<0$. We claim that $e>0$ and $f>0$.  The condition $c<0$ is equivalent to $(1+muv)a <mv^2b$, and the conditions $e>0$ and $f>0$ are equivalent to $mw^2b>(mvw-1)a$ and $(1+mvw)b>mv^2 a$, respectively. Thus we have only to prove
\begin{equation} \label{eqn-in-1} \frac {1+muv}{mv^2} > \frac {mvw-1}{mw^2} \quad \text{ and } \quad \frac {1+muv}{mv^2} > \frac {mv^2}{1+mvw} .\end{equation} 

The first inequality in \eqref{eqn-in-1} is equivalent to
\[ (1+muv) w^2 > (mvw-1) v^2 .\] Using $mv=u+w$ and $v^2=1+uw$, we obtain
\begin{align*}
\mathrm{LHS} &= (1+u(u+w))w^2 = w^2+u^2w^2+uw^m, \\
\mathrm{RHS} &= ((u+w)w-1)(1+uw)= w^2-1+u^2w^2+uw^m.
\end{align*}
It establishes the first inequality.

The second inequality in \eqref{eqn-in-1} is equivalent to
\[ (1+muv)(1+mvw) > m^2v^4 ,\]
and we have
\begin{align*}
\mathrm{LHS} &= 1+mv(u+w)+m^2uwv^2=1+m^2v^2+m^2uwv^2=1+m^2v^2(1+uw)\\& =1+ m^2v^4 > m^2v^4 =\mathrm{RHS}.
\end{align*}
It establishes the second inequality.

Now assume that $d<0$. We again claim that $e>0$ and $f>0$.  The condition $d<0$ is equivalent to $mu^2a <(muv-1)b$.  Thus we have only to prove
\begin{equation} \label{eqn-in} \frac {mu^2}{muv-1} > \frac {mvw-1}{mw^2} \quad \text{ and } \quad \frac {mu^2}{muv-1} > \frac {mv^2}{1+mvw} ,\end{equation} 
which can be checked similarly to the case $c<0$. 

Thus we have shown that if $[c,d] \not \in \mathcal P^+$ then $[e,f] \in \mathcal P^+$, and it completes the proof.
\end{proof}

Let $(a_1, a_2)$ be a pair of positive integers with $a_1\geq a_2$. 
A \emph{maximal Dyck path} of type $a_1\times a_2$, denoted by $\mathcal{D}^{a_1\times a_2}$, is a lattice path
from $(0, 0)$  to $(a_1,a_2)$ that is as close as possible to the diagonal joining $(0,0)$ and $(a_1,a_2)$ without ever going above it. Assign $s_2s_3\in W(m)$ to each horizontal edge of $\mathcal{D}^{a_1\times a_2}$, and $s_2s_1\in W(m)$ to each vertical edge. Read these elements in the order of edges along $\mathcal{D}^{a_1\times a_2}$, then we get a product of copies of $s_2s_3$ and $s_2s_1$. Denote the product by $s^{a_1\times a_2}$.

Let $\sigma_1$ and $\sigma_2$ be the simple reflections of $\mathcal H(m)$ associated to the simple roots $[1,0]$ and $[0,1]$, respectively. Then they act on $[a,b] \in \mathbb Z^2$ in the usual way by
\[ \sigma_1 [a,b]=[-a+mb, b] \quad \text{ and } \quad \sigma_2[a,b]=[a,-b+ma] .\]

\begin{lem}\label{sab} Assume that $[a,b] \in \mathcal P^+$ with $a\geq b$, and write $[c,d]=\sigma_1\sigma_2[a,b]$. Then we have
\begin{enumerate}
\item [(1)] $s([a,b])=s_3s_2 s^{a\times b} s_1$;
 
\item [(2)] $s_1s_3s_2s^{a\times b} s_2s_3s_1=s^{c \times d}$;

\item [(3)] $s_3s_2s_1 s([a,b]) s_1s_2s_3 = s([c,d])$. 
\end{enumerate}
 \end{lem}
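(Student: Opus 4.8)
The plan is to prove the three identities in order, deriving (3) formally from (1) and (2); the combinatorial heart of the lemma is (2). For (1), I would first analyze the cutting sequence of the straight segment $\eta([a,b])$ on the universal cover, where the three families of curves are the vertical lines ($T_3$, label $3$), the horizontal lines ($T_1$, label $1$), and the anti-diagonal lines $x+y\in\mathbb Z$ ($T_2$, label $2$). Parametrizing the segment as $(ta,tb)$, the crossings occur at $t=j/a$, $t=i/b$, and $t=\ell/(a+b)$; since $\gcd(a,b)=1$ no two of these coincide, and the identity $\lfloor t(a+b)\rfloor-\lfloor ta\rfloor-\lfloor tb\rfloor\in\{0,1\}$ forces the $T_2$-crossings and the $T_1/T_3$-crossings to strictly interlace, beginning and ending with a $T_2$-crossing. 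Hence $s([a,b])=s_2c_1s_2c_2\cdots s_2c_{a+b-2}s_2$ with each $c_k\in\{s_1,s_3\}$, where $(c_1,\dots,c_{a+b-2})$ is exactly the cutting sequence of a line of slope $b/a$, i.e. the palindromic interior of the lower Christoffel word, which is the step sequence of $\mathcal D^{a\times b}$ with its (forced) initial horizontal and terminal vertical step deleted. Reassembling, $s_2c_1s_2\cdots s_2=s_3s_2\,s^{a\times b}\,s_1$: since $s^{a\times b}$ begins with $s_2s_3$ and ends with $s_2s_1$, the prefix $s_3s_2\cdot s_2s_3=e$ cancels the front and $s_2s_1\cdot s_1=s_2$ absorbs the back.

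Identity (2) is the crux and, unlike (1), is a pure word identity in $W(m)$ with no reference to curves. Writing $T=s_2s_3$ and $U=s_2s_1$, the relations $(s_2s_3)^m=(s_2s_1)^m=e$ read $T^m=U^m=e$, so any run of $m$ consecutive equal steps in a Dyck path word collapses to the identity. Since $[c,d]=\sigma_1\sigma_2[a,b]$ has $c,d$ much larger than $a,b$, the word $s^{c\times d}$ is long, and the point of (2) is that after collapsing its length-$m$ runs it reduces to the short word $s_1s_3s_2\,s^{a\times b}\,s_2s_3s_1$. The plan is therefore to describe the run-length (block) structure of the maximal Dyck path $\mathcal D^{c\times d}$ explicitly via a floor-function/lattice-point analysis, group its steps into blocks of size $m$ (each contributing $T^m=e$ or $U^m=e$) together with a residual pattern, and verify that the residual reproduces the steps of $\mathcal D^{a\times b}$ flanked by the boundary letters. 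I expect this to be the main obstacle: one must pin down exactly where the length-$m$ runs sit in $\mathcal D^{c\times d}$ and check the residual matching, a delicate but in principle routine induction on the combinatorics of the path. It may be cleaner to factor $\sigma_1\sigma_2$ and treat the $\sigma_2$- and $\sigma_1$-steps separately, passing through a transposed Dyck path at the intermediate point $[a,ma-b]$.

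Finally, (3) follows formally from (1) and (2). Substituting (1) into the left side and using $s_1^2=e$ gives $s_3s_2s_1\,s([a,b])\,s_1s_2s_3=s_3s_2s_1s_3s_2\,s^{a\times b}\,s_2s_3$; inserting (2) in the form $s^{c\times d}=s_1s_3s_2\,s^{a\times b}\,s_2s_3s_1$ rewrites this as $s_3s_2\,s^{c\times d}\,s_1$, which by (1) applied to $[c,d]$ equals $s([c,d])$. For that last application one checks that $[c,d]\in\mathcal P^+$ with $c\ge d$: writing $c=(m^2-1)a-mb$ and $d=ma-b$, one has $d\ge(m-1)a>0$, while $c-d=(m^2-m-1)a-(m-1)b\ge0$ since $a\ge b$ and $m\ge2$, and $\gcd(c,d)=\gcd(a,b)=1$ because $\sigma_1\sigma_2\in SL_2(\mathbb Z)$.
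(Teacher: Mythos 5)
Your parts (1) and (3) are sound and essentially match the paper: (1) is the standard cutting-sequence/Christoffel-word computation (the paper dismisses it as ``straightforward''), and your derivation of (3) from (1) and (2), including the check that $[c,d]\in\mathcal P^+$ with $c\ge d$, is exactly the intended formal argument. The problem is (2). You correctly identify it as the combinatorial heart of the lemma, but you do not prove it: you propose to determine the run-length structure of $\mathcal D^{c\times d}$ by a floor-function analysis, collapse length-$m$ runs via $(s_2s_3)^m=(s_2s_1)^m=e$, and ``verify that the residual reproduces the steps of $\mathcal D^{a\times b}$ flanked by the boundary letters'' --- and then you explicitly defer this verification as ``the main obstacle \dots delicate but in principle routine.'' That deferred step is the entire content of (2); as written, the proposal contains no proof of it. Note also that pure run-collapsing is not quite enough: in general the horizontal runs of $\mathcal D^{c\times d}$ have lengths $m$ and $m-1$, so after collapsing you are left with factors like $(s_2s_3)^{m-1}=(s_2s_3)^{-1}$ and you must still track how these inverses recombine with the conjugating words $s_1s_3s_2$ and $s_2s_3s_1$; this bookkeeping is exactly what remains to be done.

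For comparison, the paper fills this gap by running the construction in the opposite direction: it builds $\mathcal D^{c\times d}$ from $\mathcal D^{a\times b}$ by a local substitution, replacing each horizontal step that is followed by another horizontal step with $\mathcal D^{(m^2-1)\times m}$ and each horizontal-then-vertical pair with $\mathcal D^{(m^2-m-1)\times(m-1)}$, and checks (via a lattice-point/Pick-type argument, with a pointer to the Dyck path mutations of \cite{LLZ}) that the result is the \emph{maximal} Dyck path $\mathcal D^{c\times d}$. The group-theoretic verification then reduces to two one-line conjugations, $s_2s_3s_1\,s^{(m^2-1)\times m}\,s_1s_3s_2=s_2s_3$ and $s_2s_3s_1\,s^{(m^2-m-1)\times(m-1)}\,s_1s_3s_2=(s_2s_3)(s_2s_1)$, applied block by block. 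If you want to salvage your top-down plan, you would need to prove precisely that $\mathcal D^{c\times d}$ decomposes into these two block types in the order dictated by $\mathcal D^{a\times b}$ --- which is the same combinatorial fact, so you may as well adopt the substitution formulation, where it is easier to state and to verify.
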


\begin{proof}
(1) Straightforward. 

(2) Given $\mathcal D^{a \times b}$, we replace a horizontal step, which is followed by another horizontal step, with $\mathcal D^{(m^2-1) \times m}$ and a two-step path with horizontal step and an immediate vertical step with $\mathcal D^{(m^2-m-1) \times (m-1)}$. 
Then the resulting path is $\mathcal D^{c\times d}$. This transformation of Dyck paths can also be obtained from a sequence of Dyck path mutations considered in \cite[Section 3]{LLZ}.

For example, when $m=3$, we have
\[ \begin{tikzpicture}[scale=0.4]
 \draw (0,0) grid (1,1);
\draw [orange, line width=2] (0,0) -- (1,0);
\end{tikzpicture}
\mapsto
\begin{tikzpicture}[scale=0.4]
 \draw (0,0) grid (8,3);
\draw [orange, line width=2] (0,0) -- (3,0)--(3,1)--(6,1)--(6,2)--(8,2)--(8,3);
\end{tikzpicture},
\qquad
 \begin{tikzpicture}[scale=0.4]
 \draw (0,0) grid (1,1);
\draw [orange, line width=2] (0,0) --(1,0)--(1,1);
\end{tikzpicture}
\mapsto
\begin{tikzpicture}[scale=0.4]
 \draw (0,0) grid (5,2);
\draw [orange, line width=2] (0,0) -- (3,0)--(3,1)--(5,1)--(5,2);
\end{tikzpicture} , \]
and obtain $\mathcal D^{13 \times 5}$ from $\mathcal D^{2 \times 1}$ through this transformation:
\[ \begin{tikzpicture}[scale=0.4]
 \draw   (0,0) grid (2,1);
\draw [orange, line width=2] (0,0) -- (2,0)--(2,1);
\end{tikzpicture}
\mapsto
\begin{tikzpicture}[scale=0.4]
 \draw (0,0) grid (13,5);
\draw [orange, line width=2] (0,0) -- (3,0)--(3,1)--(6,1)--(6,2)--(8,2)--(8,3)--(11,3)--(11,4)--(13,4)--(13,5);
\end{tikzpicture}.\]

We consider the associated Coxeter group elements
\begin{align*} s^h&:=s^{(m^2-1) \times m} = s([m,1])^{m-1}s([m-1,1])=s_1s_2s_3s_1 \quad \text{ and } \\  s^{hv}&:=s^{(m^2-m-1) \times (m-1)} = s([m,1])^{m-2}s([m-1,1]) =s_1s_2s_1s_2s_3s_1,
\end{align*}
and obtain 
\begin{align*}
 s_2s_3s_1 s^{h} s_1s_3s_2 &=(s_2s_3s_1)(s_1s_2s_3s_1)(s_1s_3s_2) = s_2s_3,\\
 s_2s_3s_1 s^{hv} s_1s_3s_2 &=(s_2s_3s_1)(s_1s_2s_1s_2s_3s_1)(s_1s_3s_2) = (s_2s_3)(s_2s_1).
 \end{align*}
This proves the assertion.

(3) This is an immediate consequence of the parts (1) and (2).
\end{proof}

\begin{lem}\label{sFnFn1} We have the following formulas.
\begin{enumerate}
\item[(a)] \qquad 
$s^{F_2\times F_1} =s_2s_1 \quad \text{ and } \quad s^{E_2\times E_1}=s_3s_1$. 
\begin{align*}
s^{F_{n}\times F_{n-1}}&=\left\{\begin{array}{ll}s_1(s_3s_2s_1)^{(n-3)/2}s_2s_3(s_1s_2s_3)^{(n-3)/2}s_1, & \qquad \text{ for }n\geq3\text{ odd};\\
 s_1(s_3s_2s_1)^{(n-4)/2}s_3s_1s_2s_3(s_1s_2s_3)^{(n-4)/2}s_1, & \qquad \text{ for }n\geq4\text{ even.}\end{array} \right .   \\ 
s^{E_{n}\times E_{n-1}} &=\left\{\begin{array}{ll}s_1(s_3s_2s_1)^{(n-3)/2}s_2s_1s_2s_3(s_1s_2s_3)^{(n-3)/2}s_1, &\ \text{ for }n\geq3\text{ odd};\\
 s_1(s_3s_2s_1)^{(n-4)/2}s_3s_2s_3s_1s_2s_3(s_1s_2s_3)^{(n-4)/2}s_1, &\ \text{ for }n\geq4\text{ even.} \end{array} \right . 
\end{align*}
\item[(b)] \qquad $ (s_3s_2s_1 s([F_n,F_{n-1}]))^m=(s_1s_2s_3 s([F_n,F_{n-1}]))^m=e  \qquad \text{ for all } n \ge 1 . $

\end{enumerate}
\end{lem}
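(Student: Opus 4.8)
The plan is to prove both parts by a two-step induction, the driving observation being that $\sigma_1\sigma_2$ shifts the indices of the distinguished pairs by two. Concretely, from $\sigma_2[a,b]=[a,ma-b]$ and $\sigma_1[a,b]=[-a+mb,b]$ together with the recursions $F_{n+1}=mF_n-F_{n-1}$ and $E_{n+1}=mE_n-E_{n-1}$, I first record
\[ \sigma_1\sigma_2[F_n,F_{n-1}]=[F_{n+2},F_{n+1}] \qquad\text{and}\qquad \sigma_1\sigma_2[E_n,E_{n-1}]=[E_{n+2},E_{n+1}]. \]
Feeding these into Lemma \ref{sab}(2) turns the statement of part (a) into a pure recursion, and feeding them into Lemma \ref{sab}(3) does the same for part (b).

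For part (a), I would first dispose of the base cases. The maximal Dyck path of type $m\times 1$ consists of $m$ horizontal edges followed by a single vertical edge, so $s^{F_2\times F_1}=(s_2s_3)^m(s_2s_1)=s_2s_1$, using the dihedral relation $(s_2s_3)^m=e$; similarly $s^{E_2\times E_1}=(s_2s_3)^{m-1}(s_2s_1)=s_3s_1$ because $(s_2s_3)^{m-1}s_2=s_3$. The odd base cases $n=3$ are exactly the words $s^h=s^{(m^2-1)\times m}$ and $s^{hv}=s^{(m^2-m-1)\times(m-1)}$ already evaluated inside the proof of Lemma \ref{sab}(2), since $[F_3,F_2]=[m^2-1,m]$ and $[E_3,E_2]=[m^2-m-1,m-1]$. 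For the inductive step, Lemma \ref{sab}(2) and the index shift give
\[ s^{F_{n+2}\times F_{n+1}}=s_1s_3s_2\, s^{F_n\times F_{n-1}}\, s_2s_3s_1, \]
and identically for $E$. Substituting the claimed closed form, the conjugating letters collapse via $s_1s_3s_2\cdot s_1=s_1(s_3s_2s_1)$ on the left and $s_1\cdot s_2s_3s_1=(s_1s_2s_3)s_1$ on the right, so that the exponent $(n-3)/2$ (resp.\ $(n-4)/2$) is incremented by one while the central block is untouched; this telescoping reproduces the formula at $n+2$. The even chain is seeded by computing $s^{F_4\times F_3}$ and $s^{E_4\times E_3}$ from the $n=2$ values through the same step.

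For part (b), set $c=s_3s_2s_1$ and $g_n=s([F_n,F_{n-1}])$, so that $c^{-1}=s_1s_2s_3$ and the assertion is $(cg_n)^m=(c^{-1}g_n)^m=e$. Lemma \ref{sab}(3) with the index shift yields $g_{n+2}=c\,g_n\,c^{-1}$ for $n\ge 2$. Consequently $cg_{n+2}=c(cg_n)c^{-1}$ is conjugate to $cg_n$, and since $g_n$ is an involution, $c^{-1}g_{n+2}=g_nc^{-1}$ is conjugate to $c^{-1}g_n$; thus both $m$-th powers are constant along each parity chain, and it remains to check the base cases. Using part (a) and Lemma \ref{sab}(1) I compute $g_1=s_1$, $g_2=s_3$, and $g_3=s_3s_2s_1s_2s_3$, whence $cg_1=s_3s_2$, $c^{-1}g_1=s_1(s_2s_3)s_1$, $cg_2=s_3(s_2s_1)s_3$, and $c^{-1}g_2=s_1s_2$, each conjugate to an alternating word of length two whose $m$-th power is trivial by $(s_1s_2)^m=(s_2s_3)^m=e$. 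Since one verifies directly that $g_3=cg_1c^{-1}$, the odd chain is anchored at $n=1$ and the even chain at $n=2$, covering all $n\ge 1$.

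The inductive steps are purely formal once the shift $\sigma_1\sigma_2\colon[F_n,F_{n-1}]\mapsto[F_{n+2},F_{n+1}]$ is in hand, so I expect the real work to sit in two places: the small Dyck-path identities underlying the base cases (which genuinely rely on the dihedral collapse $(s_2s_3)^m=e$), and the bookkeeping that matches the two-step recursion against the parity-split closed forms, in particular the transition between the special value $s^{F_2\times F_1}=s_2s_1$ and the general even formula valid only from $n=4$. The one point demanding care is the $n=1$ endpoint in part (b): since $[1,0]\notin\mathcal P^+$, Lemma \ref{sab}(3) does not literally apply there, and the chain must instead be closed by the direct identity $g_3=cg_1c^{-1}$.
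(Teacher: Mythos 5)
Your proof is correct, but it reaches the formulas by a genuinely different route from the paper. For part (a) the paper runs a one-step induction $n\to n+1$ based on the self-similar decomposition of the Dyck paths themselves: $\mathcal{D}^{F_{n}\times F_{n-1}}$ is $m-1$ copies of $\mathcal{D}^{F_{n-1}\times F_{n-2}}$ followed by one copy of $\mathcal{D}^{E_{n-1}\times E_{n-2}}$ (and similarly with $m-2$ copies for the $E$-paths), a fact it justifies with Pick's theorem; the closed forms then follow by collapsing $(s_2s_3)^{m-1}s_2=s_3$ and $(s_2s_3)^{m-2}s_2=s_3s_2s_3$. You instead outsource all the path combinatorics to Lemma \ref{sab}(2) via the (correct, easily checked) identity $\sigma_1\sigma_2[F_n,F_{n-1}]=[F_{n+2},F_{n+1}]$, which turns the claim into the two-step conjugation recursion $s^{F_{n+2}\times F_{n+1}}=s_1s_3s_2\,s^{F_n\times F_{n-1}}\,s_2s_3s_1$; the trade-off is that you must seed two parity chains (your four base cases at $n=2,3$ all check out, and the $n=3$ ones are indeed the words $s^h$, $s^{hv}$ from the proof of Lemma \ref{sab}(2)). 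For part (b) the paper simply reads off from the closed forms of (a) that each element is a conjugate of a two-letter alternating word and invokes $(s_1s_2)^m=(s_2s_3)^m=e$, whereas you propagate the identity along the parity chains by conjugation via Lemma \ref{sab}(3). Your version is slightly longer here but has the merit of explicitly confronting the one point the paper glosses over: the case $n=1$, where $[F_1,F_0]=[1,0]\notin\mathcal P^+$ so Lemma \ref{sab} does not literally apply, and which you close with the direct verification $s([F_3,F_2])=s_3s_2s_1\,s([F_1,F_0])\,s_1s_2s_3$. Both arguments are sound; the paper's makes the $m$-fold self-similarity of the paths visible, while yours is shorter modulo Lemma \ref{sab} and isolates exactly where the degenerate endpoint needs separate treatment.
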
 
\begin{proof}
(a) Let $\mathcal{F}_n:=\mathcal{D}^{F_{n}\times F_{n-1}}$ and $\mathcal{E}_n:=\mathcal{D}^{E_{n}\times E_{n-1}}$. 
We use induction on $n$. It is easy to check the base cases. Suppose $n\geq 3$. Then the Dyck path $\mathcal{F}_n$ consists of $m-1$ copies of $\mathcal{F}_{n-1}$ followed by one copy of $\mathcal{E}_{n-1}$. This is because $((m-1)F_{n-1},(m-1)F_{n-2})$ is below the diagonal, and Pick's theorem implies that there is no integral point in the interior of the triangle formed by $(0,0),(F_n,F_{n-1})$, and $((m-1)F_{n-1},(m-1)F_{n-2})$. Similarly, the Dyck path $\mathcal{E}_n$ consists of $m-2$ copies of $\mathcal{F}_{n-1}$ followed by one copy of $\mathcal{E}_{n-1}$.  It is straightforward to check the induction process as follows.

Suppose that $n$ is odd. From the induction hypothesis, we have  \begin{align*} s^{F_{n}\times F_{n-1}} & =s_1(s_3s_2s_1)^{(n-3)/2}s_2s_3(s_1s_2s_3)^{(n-3)/2}s_1,\ \text{ and } \\ s^{E_{n}\times E_{n-1}} & =s_1(s_3s_2s_1)^{(n-3)/2}s_2s_1s_2s_3(s_1s_2s_3)^{(n-3)/2}s_1.
\end{align*} 
Then $$\aligned
&s^{F_{n+1}\times F_{n}}\\
&=(s_1(s_3s_2s_1)^{(n-3)/2}s_2s_3(s_1s_2s_3)^{(n-3)/2}s_1)^{m-1}(s_1(s_3s_2s_1)^{(n-3)/2}s_2s_1s_2s_3(s_1s_2s_3)^{(n-3)/2}s_1)\\
&=s_1(s_3s_2s_1)^{(n-3)/2}(s_2s_3)^{m-1}s_2s_1s_2s_3(s_1s_2s_3)^{(n-3)/2}s_1\\
&=s_1(s_3s_2s_1)^{(n-3)/2}s_3s_1s_2s_3(s_1s_2s_3)^{(n-3)/2}s_1,
\endaligned$$ and
$$\aligned
&s^{E_{n+1}\times E_{n}}\\
&=(s_1(s_3s_2s_1)^{(n-3)/2}s_2s_3(s_1s_2s_3)^{(n-3)/2}s_1)^{m-2}(s_1(s_3s_2s_1)^{(n-3)/2}s_2s_1s_2s_3(s_1s_2s_3)^{(n-3)/2}s_1)\\
&=s_1(s_3s_2s_1)^{(n-3)/2}(s_2s_3)^{m-2}s_2s_1s_2s_3(s_1s_2s_3)^{(n-3)/2}s_1\\
&=s_1(s_3s_2s_1)^{(n-3)/2}s_3s_2s_3s_1s_2s_3(s_1s_2s_3)^{(n-3)/2}s_1.
\endaligned$$
The other case can be similarly proved.

(b) Each of $s_3s_2s_1 s([F_n,F_{n-1}])=s_3s_2s_1 s_3s_2 s^{F_n\times F_{n-1}}s_1$ and $s_1s_2s_3 s([F_n,F_{n-1}])=s_1s^{F_n\times F_{n-1}}s_1$   is a conjugate of one of $s_1s_2, s_2s_1, s_2s_3$, or $s_3s_2$, which implies the statement. 
\end{proof}

\begin{remark} \label{rmk-1}
Notice that  $s_3s_2s_1$ can be considered as a curve going around below an integral point  and $s_1s_2s_3$ going around above an integral point. See the illustrations below.
\begin{center}
\begin{tikzpicture}[scale=1]
\draw[step=1, gray, opacity=0.7, very thin]  (-0.3,1.3) -- (0.8,0.2);
\draw[step=1, gray, opacity=0.7, very thin]  (-0.4,1) -- (0.8,1);
\draw[step=1, gray, opacity=0.7, very thin]  (0,1.4) -- (0,0.2);
\draw [fill, black] (0,1) circle [radius=0.05];
\draw[rounded corners=1mm, blue] (-0.4,0.2)--(-0.2,0.6)--(0.4, 0.9)--(0.8,1.4);
\end{tikzpicture}
\hspace{2 cm}
\begin{tikzpicture}[scale=1]
\draw[step=1, gray, opacity=0.7, very thin]  (0.3,-1.3) -- (-0.8,-0.2);
\draw[step=1, gray, opacity=0.7, very thin]  (0.4,-1) -- (-0.8,-1);
\draw[step=1, gray, opacity=0.7, very thin]  (0,-1.4) -- (0,-0.2);
\draw [fill, black] (0,-1) circle [radius=0.05];
\draw[rounded corners=1mm, blue] (0.4,-0.2)--(0.2,-0.6)--(-0.4, -0.9)--(-0.8,-1.4);
\end{tikzpicture}
\end{center}
This fact will be used frequently in the proof of the following lemma.
\end{remark}

\begin{lem} \label{lem-sequiv}
For a fixed positive integer $n$ and $[a,b] \in \mathcal P^+$ with $a>b$, let $\kappa= -F_{n-1}a+F_{n}b$.  Assume that $a':=a-m|\kappa|F_n>0$ and $b':=b-m|\kappa|F_{n-1}>0$. Then
$$
s([a',b']) = s([a,b]).
$$
\end{lem}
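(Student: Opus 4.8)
The plan is to transport the whole statement to the Dyck-path side via Lemma~\ref{sab} and to realize the passage from $[a,b]$ to $[a',b']$ as the deletion of $|\kappa|$ blocks, each an $m$-th power of the Coxeter element $s^{F_n\times F_{n-1}}$ and hence trivial. The key algebraic input is that, combining Lemma~\ref{sab}(1) with Lemma~\ref{sFnFn1}(b),
\[ (s^{F_n\times F_{n-1}})^m = e \qquad \text{in } W(m), \]
because $s_1 s_2 s_3\, s([F_n,F_{n-1}]) = s_1\, s^{F_n\times F_{n-1}}\, s_1$, so $(s_1\, s^{F_n\times F_{n-1}}\, s_1)^m=e$ gives $(s^{F_n\times F_{n-1}})^m=e$. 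Geometrically (Remark~\ref{rmk-1}) this says that following the direction $[F_n,F_{n-1}]$ for $m$ periods while rounding the intervening lattice points on one fixed side contributes nothing to $s(\cdot)$. The sign of $\kappa$ records whether the line $\eta([a,b])$ passes \emph{above} these points (rounded by $s_1s_2s_3$, the case $\kappa>0$) or \emph{below} (rounded by $s_3s_2s_1$, the case $\kappa<0$), and the two cases are symmetric.

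Since $a>b$, Lemma~\ref{sab}(1) gives $s([a,b])=s_3 s_2\, s^{a\times b}\, s_1$, and likewise for $[a',b']$ once $a'\ge b'$ is checked; so it suffices to prove the Dyck-path identity $s^{a\times b}=s^{a'\times b'}$. I would establish this by decomposing the maximal Dyck path $\mathcal{D}^{a\times b}$, in the self-similar fashion of the proof of Lemma~\ref{sFnFn1}(a), into blocks $\mathcal{F}_n=\mathcal{D}^{F_n\times F_{n-1}}$ and $\mathcal{E}_n=\mathcal{D}^{E_n\times E_{n-1}}$, and by locating among them $|\kappa|$ groups of $m$ \emph{consecutive} copies of $\mathcal{F}_n$. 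Reading words along the path, each such group contributes the consecutive substring $(s^{F_n\times F_{n-1}})^m=e$, so deleting all of them removes exactly $m|\kappa|\,[F_n,F_{n-1}]$ from the endpoint and carries $\mathcal{D}^{a\times b}$ to $\mathcal{D}^{a'\times b'}$ without changing the element of $W(m)$.

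To avoid manipulating $\mathcal{F}_n$ for large $n$, I would run an induction on $n$ using the conjugation symmetry of Lemma~\ref{sab}(3). Writing $\tau=\sigma_1\sigma_2$ and $g=s_3s_2s_1$, one has $g\, s([p,q])\, g^{-1}=s(\tau[p,q])$ for $p\ge q$, while a direct computation gives $\tau[F_n,F_{n-1}]=[F_{n+2},F_{n+1}]$ together with the invariance $-F_{n+1}(\tau[a,b])_1+F_{n+2}(\tau[a,b])_2=\kappa$. Thus the level-$n$ statement for $[a,b]$ is conjugate to the level-$(n+2)$ statement for $\tau[a,b]$ with the \emph{same} $\kappa$, and applying $\tau^{-1}$ repeatedly reduces everything to the base levels $n=1,2$. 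There the blocks become transparent: for $n=1$ the transformation is $[a,b]\mapsto[a-mb,b]$ with $a>mb$, every maximal horizontal run of $\mathcal{D}^{a\times b}$ has length $\ge m$ (as the slope is $<1/m$), and deleting $m$ horizontal steps $(s_2s_3)^m=e$ from each of the $b$ runs yields $\mathcal{D}^{(a-mb)\times b}$; for $n=2$ the block is $\mathcal{D}^{m\times 1}$ with $s^{m\times 1}=s_2s_1$, and one deletes groups $(s_2s_1)^m=e$.

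The main obstacle lies precisely in the two places where I invoked a self-similar decomposition. First, I must show that the hypotheses $a'>0$ and $b'>0$ confine the slope $b/a$ to a neighborhood of $F_{n-1}/F_n$ narrow enough that $\tau^{-1}$ keeps both $\tau^{-1}[a,b]$ and $\tau^{-1}[a',b']$ in $\mathcal{P}^+$, with first coordinate exceeding the second at every step down to the base level, so that Lemma~\ref{sab} and the induction hypothesis keep applying. Second, I must prove that $\mathcal{D}^{a\times b}$ genuinely contains the asserted $|\kappa|$ groups of $m$ consecutive $\mathcal{F}_n$-blocks and that their removal produces exactly the \emph{maximal} path $\mathcal{D}^{a'\times b'}$, not merely a lattice path of the correct size. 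This second point is where the quantitative estimates—presumably the content of Lemma~\ref{lem-ineq} together with Lemma~\ref{lem-ab}—are needed to control the greedy (continued-fraction) structure of the maximal Dyck path near slope $F_{n-1}/F_n$, and it is the step I expect to require the most care.
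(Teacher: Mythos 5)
Your reduction to the word identity $(s^{F_n\times F_{n-1}})^m=e$ is sound --- it does follow from Lemma \ref{sab}(1) together with Lemma \ref{sFnFn1}(b) exactly as you say, and it is the same group-theoretic input the paper uses (in the conjugated form of Remark \ref{rmk-1}). But the heart of the lemma is the combinatorial claim that you flag and then leave open: that the maximal Dyck path $\mathcal{D}^{a\times b}$ contains $|\kappa|$ groups of $m$ \emph{consecutive} copies of $\mathcal{D}^{F_n\times F_{n-1}}$ as factors of its reading word, and that deleting them produces exactly the reading word of $\mathcal{D}^{a'\times b'}$. Nothing in the paper's preceding lemmas supplies this; in particular your guess that Lemmas \ref{lem-ab} and \ref{lem-ineq} provide the needed control is off the mark --- those lemmas only give positivity of the new coordinates and the decrease of $Q$, and they are used later in Proposition \ref{pro-pr}, not here. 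The self-similar decomposition in the proof of Lemma \ref{sFnFn1}(a) is established only for the special slopes $F_{n-1}/F_n$ and $E_{n-1}/E_n$, so invoking it for a general $[a,b]$ with slope near $F_{n-1}/F_n$ is an unproved extension; and even granting some block decomposition, you would still have to show that the $m$ copies occur consecutively ($|\kappa|$ separate times) and that the word left after deletion is the \emph{maximal} path's word rather than some other arrangement of $a'$ horizontal and $b'$ vertical steps. As written, the proposal is a plausible strategy with its central step missing.

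The paper closes this gap by an entirely different device, working with curves on the universal cover rather than with Dyck-path words. It forms the triangle $T$ with vertices $(0,0)$, $(a,b)$, $(a',b')$, counts its interior lattice points by Pick's theorem (there are exactly $m\binom{\kappa}{2}$ of them), and then uses a Bezout/congruence argument to show these points lie on $\kappa$ parallel segments $L_1,\dots,L_\kappa$ in the direction $[F_n,F_{n-1}]$, with exactly $mi$ points on $L_i$. The curve is then slid across one segment at a time: passing a row of $m$ collinear points in that direction multiplies $s(\cdot)$ by $(s_3s_2s_1 s([F_n,F_{n-1}]))^m$ or $(s_1s_2s_3 s([F_n,F_{n-1}]))^m$, which is trivial by Lemma \ref{sFnFn1}(b). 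If you want to salvage the word-side argument, you would need to prove the consecutive-block decomposition directly (a continued-fraction analysis of $\mathcal{D}^{a\times b}$ for $b/a$ close to $F_{n-1}/F_n$), which amounts to a discretized version of the paper's lattice-point count; that is the step that remains to be done.
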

\begin{proof}
If $\kappa=0$ then trivial. Here we give a proof for the case of $\kappa>0$, as the $\kappa<0$ case is similar. We need to be able to locate the integral points inside the triangle, say $T$, formed by $(0,0)$, $(a,b)$ and $(a',b')$.  Pick's theorem implies that there are exactly $m{\kappa\choose 2}$  integral points  in the interior of $T$.     For each $i\in\{1,...,\kappa\}$, let $L_i\subset \mathbb{R}^2$ be the line segment from $(\frac{i}{\kappa}a',\frac{i}{\kappa}b')$ to $(\frac{i}{\kappa}a,\frac{i}{\kappa}b)$. For technical simplicity, assume that $L_\kappa$ is open ended at $(a,b)$ so that $L_\kappa$ contains exactly $m\kappa$ integral points.
Also note that $\kappa >0$ implies  $\frac b a < \frac {b'}{a'}$.  

First, assume $\kappa =1$. Then there is no integral point inside $T$, and by  Lemma \ref{sFnFn1}\,(b) and Remark \ref{rmk-1}, we have
\[ s([a,b])= s([a',b']) (s_3s_2s_1 s([F_n, F_{n-1}]))^m =s([a',b']). \]
For example, consider the case $m=3$, $n=2$ and $(F_2,F_1)=(3,1)$. If $[a,b]=[11,4]$, then $\kappa=1$ and $[a',b']=[2,1]$. Then we have
\[ s([11,4])= s_2s_3s_2(s_3s_2s_1s_2s_3s_2s_3s_2)^3 =s_2s_3s_2=s([2,1]).\]

Now assume $\kappa >1$. Let $g_a=\gcd(a,F_n)$ and $g_b=\gcd(b,F_{n-1})$.  
There exists an integer $x\not\equiv 0\ (\text{mod }\kappa)$ such that  $\frac{a}{g_a}-x\frac{F_{n}}{g_a}$ is divisible by $-F_{n-1}\frac{a}{g_a}+\frac{F_{n}}{g_a}b$, and there exists an integer $y\not\equiv 0\ (\text{mod }\kappa)$ such that  $\frac{b}{g_b}-y\frac{F_{n-1}}{g_b}$ is divisible by $-\frac{F_{n-1}}{g_b}a+F_{n}\frac{b}{g_b}$. We want to show that there is an integer $z\not\equiv 0 \ (\text{mod }\kappa)$ satisfying both conditions simultaneously, i.e., $x=y$ can be made. 

Note that $-F_{n-1}(a-xF_n)+ F_n(b-yF_{n-1})$ (hence $(x-y)F_{n-1}F_n$) is divisible by  $\kappa$.  Since $\gcd(F_{n-1},F_{n})=1$, we must have that $x-y$ is divisible by $-\frac{F_{n-1}}{g_b}\frac{a}{g_a}+\frac{F_{n}}{g_a}\frac{b}{g_b}$. Let $c$ be the integer such that 
$$
x-y=c\left(-\frac{F_{n-1}}{g_b}\frac{a}{g_a}+\frac{F_{n}}{g_a}\frac{b}{g_b}\right).
$$
By Bezout's theorem, $\gcd(g_a,g_b)=1$ implies that there exist two integers $c_1$ and $c_2$ such that $c_1g_a -c_2g_b=-c$, equivalently
$$\left(x+c_1\left(-\frac{F_{n-1}}{g_b}a+F_{n}\frac{b}{g_b}\right)\right)- \left(y+ c_2\left(-F_{n-1}\frac{a}{g_a}+\frac{F_{n}}{g_a}b\right)\right)=0.$$
Hence there is an integer $z\not\equiv 0 \ (\text{mod }\kappa)$ such that $z\equiv x+ c_1\frac{\kappa}{g_b} \equiv y+c_2\frac{\kappa}{g_a}(\text{mod }\kappa)$ and     both $a-zF_n$ and $b-zF_{n-1}$ are divisible by $\kappa$. 

This implies that there are at least $mi$ integral points, say $P_{i,1},...,P_{i,mi}$ (from the left), on $L_i$. In particular, there are at least $m{\kappa\choose 2}$ integral points in the interior of $T$, but we know that there are no more. For $i\in\{1,...,\kappa\}$, let $M_i$ be an admissible curve which starts at $(0,0)$, goes below $P_{i,1},...,P_{i,mi}$ but above $P_{i-1,1},...,P_{i-1,m(i-1)}$, and ends at $(a,b)$. It would be useful to give two different names to $M_i$ by letting 
$M_i^{-}$ (resp. $M_i^+$) be a curve (isotopic to $M_i$) sufficiently close to $P_{i-1,1},...,P_{i-1,m(i-1)}$ (resp. $P_{i,1},...,P_{i,mi}$). Note that $s(M_i)=s(M_i^+)=s(M_i^-)$.

For $i\in\{1,...,\kappa-1\}$, let $S_i$ be the line segment from $P_{i-1,1}$ to $P_{i,1}$ (where $P_{0,1}=(0,0)$), and  let $T_i$ be the line segment from $P_{i,mi+1}$ to $P_{i+1,m(i+1)+1}$, where $P_{i,mi+1}$ is the integral point that makes $P_{i,mi}$ become the midpoint between $P_{i,mi-1}$ and $P_{i,mi+1}$. 

Then, by  Lemma \ref{sFnFn1}\,(b) and Remark \ref{rmk-1},  we have 
$$\aligned s([a,b]) &= s( M_1 )=s( M_1^+ )=s(S_1) (s_3s_2s_1s([F_n,F_{n-1}]))^m s_1s_2s_3s(T_1)\cdots s_1s_2s_3s(T_{\kappa-1})\\
  &=s(S_1) (s_1s_2s_3s([F_n,F_{n-1}]))^m s_1s_2s_3s(T_1)\cdots s_1s_2s_3s(T_{\kappa-1}) = s(M_2^-)=s(M_2)\\ 
  &=s(M_2^+)=s(S_1)s_1s_2s_3s(S_2) (s_3s_2s_1s([F_n,F_{n-1}]))^{2m} s_1s_2s_3s(T_2)\cdots s_1s_2s_3s(T_{\kappa-1})\\
  &=\cdots=s(S_1)s_1s_2s_3s(S_2)\cdots s_1s_2s_3s(S_{\kappa-1}) (s_1s_2s_3s([F_n,F_{n-1}]))^{(\kappa-1)m} s_1s_2s_3s(T_{\kappa-1})\\
  &=s(M_{\kappa}^-)=s(M_{\kappa}^+)=s([a',b'])(s_3s_2s_1s([F_n,F_{n-1}]))^{\kappa m}= s([a',b']). \endaligned$$

\begin{figure}
  \centering
\begin{tikzpicture}[scale=0.4]
\draw[red] (0,0)--(3,8);
\draw[orange] (0,0)--(2,3)--(4,6); \draw[orange] (11,6)--(22,12)--(30,17);
\draw[rounded corners=1mm, blue] (0,0)--(2,2.5)--(8.5,4.8)--(21.8,12)--(30,17);
\draw[rounded corners=1mm, blue] (0,0)--(1.6,3)--(4,5)--(19,10.8)--(30,17);
\draw[rounded corners=1mm, blue] (0,0)--(1.4,3)--(3.6,6)--(6,8)--(24,14)--(30,17);
 \draw [fill] (6,9) circle [radius=0.1];\draw [fill] (3,8) circle [radius=0.1];\draw (3,8) node[anchor=east]  {\tiny$P_{3,1}$}; \draw [fill] (9,10) circle [radius=0.1];\draw [fill] (12,11) circle [radius=0.1];\draw [fill] (15,12) circle [radius=0.1]; \draw [fill] (18,13) circle [radius=0.1]; \draw [fill] (21,14) circle [radius=0.1];
 \draw [fill] (24,15) circle [radius=0.1]; \draw [fill] (27,16) circle [radius=0.1];
 \draw [fill] (30,17) circle [radius=0.1];\draw (30,17) node[anchor=west]  {\tiny$P_{3,10}$};
\draw [fill] (2,3) circle [radius=0.1];
\draw (2,3) node[anchor=west]  {\tiny$P_{1,1}$};
\draw [fill] (5,4) circle [radius=0.1];
\draw [fill] (8,5) circle [radius=0.1]; \draw [fill] (11,6) circle [radius=0.1];\draw (11,6) node[anchor=west]  {\tiny$P_{1,4}$};
 \draw [fill] (4,6) circle [radius=0.1];\draw (4,6) node[anchor=west]  {\tiny$P_{2,1}$};
 \draw [fill] (7,7) circle [radius=0.1];\draw [fill] (10,8) circle [radius=0.1];\draw [fill] (13,9) circle [radius=0.1];\draw [fill] (16,10) circle [radius=0.1];
 \draw [fill] (19,11) circle [radius=0.1]; \draw [fill] (22,12) circle [radius=0.1]; \draw (22,12) node[anchor=west]  {\tiny$P_{2,7}$};
\end{tikzpicture}
  \caption{A picture illustrating the case of $m=\kappa=3$, $[a,b]=[48,17]$ and $[a',b']=[21,8]$. The three blue-colored curves represent $M_1,M_2$, and $M_3$ (from the bottom). The four orange-colored line segments are $S_1,S_2,T_1$, and $T_2$.}
\end{figure}
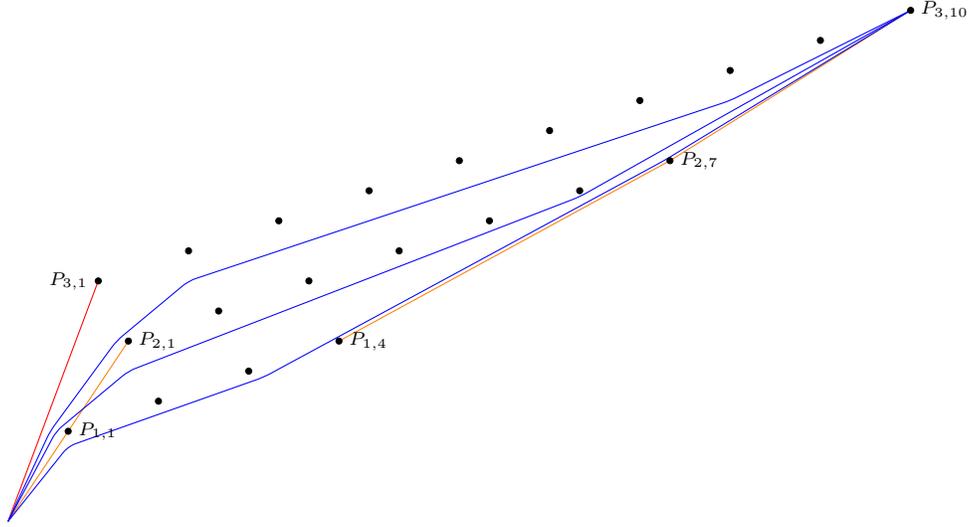

\end{proof}

When $n=0$ in the above lemma, we have $\kappa = b$ and obtain the following corollary.
\begin{cor} \label{cor-a}
Let $[a,b] \in \mathcal P^+$. Then, for $j \in \mathbb Z_{>0}$,
\[ s([a+jmb, b])=s([a,b]).\]
\end{cor}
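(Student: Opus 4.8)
The plan is to read off Corollary~\ref{cor-a} as a one-parameter specialization of Lemma~\ref{lem-sequiv} followed by a short induction. Concretely, I would take the index $n$ for which $F_{n-1}=0$ and $F_n=1$, so that for any input $[A,B]\in\mathcal P^+$ the quantity $\kappa=-F_{n-1}A+F_nB$ equals $B$, and the lemma's reduced point becomes $A'=A-m|\kappa|F_n=A-mB$ and $B'=B-m|\kappa|F_{n-1}=B$. With this choice the conclusion of the lemma reads simply
\[ s([A-mB,\,B])=s([A,B]), \]
valid whenever the lemma's hypotheses hold for $[A,B]$.

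The core step is then to apply this to $[A,B]=[a+kmb,\,b]$ for each $k\ge 1$, after checking the standing hypotheses of the lemma. The coprimality $\gcd(a+kmb,b)=\gcd(a,b)=1$ holds because $kmb$ is a multiple of $b$, so each $[a+kmb,b]$ lies in $\mathcal P^+$; the inequality $A>B$ follows from $a+kmb-b=a+b(km-1)>0$ since $a,b\ge 1$ and $m\ge 2$; and the positivity of the reduced point, $A-mB=a+(k-1)mb>0$ together with $B=b>0$, is immediate for $k\ge 1$. The specialized lemma then gives, for every $k\ge 1$,
\[ s([a+(k-1)mb,\,b])=s([a+kmb,\,b]). \]

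Finally I would chain these identities for $k=j,\,j-1,\dots,1$ to obtain
\[ s([a+jmb,\,b])=s([a+(j-1)mb,\,b])=\cdots=s([a,b]), \]
which is the claim. I do not expect a genuine obstacle here, since all the substance lives in Lemma~\ref{lem-sequiv}; the only points demanding care are identifying the specialization that fixes the second coordinate (forcing $\kappa=b$) and confirming that the coprimality and positivity hypotheses survive at each stage of the induction, both of which are elementary verifications.
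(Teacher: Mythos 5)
Your proposal is correct and matches the paper's own (one-line) proof: the corollary is exactly the specialization of Lemma~\ref{lem-sequiv} at the index with $F_{n-1}=0$, $F_n=1$, so that $\kappa=b$, iterated $j$ times. Your identification of that index via the condition $F_{n-1}=0$, $F_n=1$ (i.e.\ $n=1$ under the paper's convention $F_0=0$, $F_1=1$, despite the paper writing ``$n=0$'') and your verification of the hypotheses at each stage are exactly the elementary checks the paper leaves implicit.
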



For $[a,b]\in \mathcal P^+$, define 
\[ Q([a,b])=a^2+b^2-mab .\]

\begin{lem} \label{lem-ineq}
 Assume  that \[ \frac {F_{n-1}}{F_{n}} < \frac b a < \frac {F_{n}}{F_{n+1}}, \qquad [a,b]\in \mathcal P^+.\] Then, for any $j \in \mathbb Z_{>0}$, we have
\begin{align*} & Q \left ( [a,b]-mj(-F_{n-1}a + F_{n}b)[F_{n}, F_{n-1}] \right ) < Q([a,b]), \quad \text{ and }\\
& Q \left ( [a,b]+mj(-F_{n}a + F_{n+1}b)[F_{n+1}, F_{n}] \right ) < Q([a,b]). 
 \end{align*}
\end{lem}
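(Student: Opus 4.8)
The plan is to recognize that $Q$ is exactly the quadratic form attached to the Cartan matrix of $\mathcal H(m)$, and that the two translation vectors $\gamma:=[F_n,F_{n-1}]$ and $\gamma':=[F_{n+1},F_n]$ are its shortest real roots. First I would record the identity $F_n^2-F_{n-1}F_{n+1}=1$, which follows by a one-line induction from $F_{n+1}=mF_n-F_{n-1}$. This identity gives $Q(\gamma)=Q(\gamma')=1$, shows $\det(\gamma,\gamma')=1$ so that $\{\gamma,\gamma'\}$ is a $\mathbb Z$-basis of the root lattice, and—using also $F_{n+1}+F_{n-1}=mF_n$—yields $B_Q(\gamma,\gamma')=\tfrac m2$ for the polarization $B_Q$ of $Q$. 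I would stress at the outset that $Q$ is defined in the paper only on $\mathcal P^+$, so that the clause ``for any $j$'' is to be read as: for every $j\in\mathbb Z_{>0}$ for which the displayed argument of $Q$ actually lies in $\mathcal P^+$ (otherwise $Q$ of it is not even defined). The claim will then be proved for all such $j$, with no further restriction.

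Next I would pass to the basis $\{\gamma,\gamma'\}$. Writing $[a,b]=p\gamma+q\gamma'$ with $p,q\in\mathbb Z$, the two defining quantities become $\kappa:=-F_{n-1}a+F_nb=\det(\gamma,[a,b])=q$ and $-F_na+F_{n+1}b=-\det([a,b],\gamma')=-p$. Hence the slope hypothesis $\frac{F_{n-1}}{F_n}<\frac ba<\frac{F_n}{F_{n+1}}$ is exactly $q>0$ together with $p>0$, i.e. $p,q\ge 1$. In these coordinates $Q=p^2+mpq+q^2$; the first translation fixes the $\gamma'$-coefficient $q$ and sends the $\gamma$-coefficient $p\mapsto p-mjq$, while the second fixes $p$ and sends $q\mapsto q-mjp$. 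The two assertions are interchanged by $p\leftrightarrow q$, $\gamma\leftrightarrow\gamma'$, so it suffices to treat the first.

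Restricted to the line of fixed $\gamma'$-coefficient $q$, the form $Q(x)=x^2+mqx+q^2=(x+\tfrac{mq}2)^2+q^2(1-\tfrac{m^2}4)$ is an upward parabola with vertex at $x=-\tfrac{mq}2$, so $Q(x)<Q(p)$ precisely for $x$ strictly between $p$ and its mirror $-mq-p$. Since $j\ge 1$ forces the new coefficient $p-mjq$ to lie strictly to the left of $p$, the only thing to check is that the mirror point $-mq-p$ lies at or to the left of the left endpoint $x_{\min}$ of the interval of $x$ for which $x\gamma+q\gamma'\in\mathcal P^+$. Expanding, $x\gamma+q\gamma'=[\,xF_n+qF_{n+1},\ xF_{n-1}+qF_n\,]$, and since $F_{n+1}/F_n<F_n/F_{n-1}$ (again from $F_n^2-F_{n-1}F_{n+1}=1$) the binding positivity constraint is $xF_n+qF_{n+1}>0$, giving $x_{\min}=-qF_{n+1}/F_n$. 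The desired inequality $-mq-p\le -qF_{n+1}/F_n$ rearranges, via $F_{n+1}-mF_n=-F_{n-1}$, to $-qF_{n-1}/F_n\le p$, which holds because the left-hand side is negative while $p\ge 1$.

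Putting this together, whenever the output of the first translation lies in $\mathcal P^+$ its $\gamma$-coefficient exceeds $x_{\min}\ge -mq-p$, so it lies in the open interval $(-mq-p,\,p)$ and hence $Q(\text{output})<Q([a,b])$; the same computation with $p$ and $q$ interchanged settles the second translation. The main obstacle here is conceptual rather than computational: one must recognize that $Q$ lives only on $\mathcal P^+$, reinterpret the two translations as motion along fixed-coordinate lines in the basis $\{\gamma,\gamma'\}$, and verify the single endpoint inequality $-mq-p\le x_{\min}$ that keeps the mirror point outside the admissible interval. Everything after that is the one-variable geometry of an upward parabola. I expect the bookkeeping of the positivity constraint—deciding which of the two coordinate inequalities is binding—to be the most error-prone step, and I would handle it cleanly by invoking the monotonicity $F_{n+1}/F_n<F_n/F_{n-1}$.
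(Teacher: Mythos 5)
Your proof is correct under the reading you state, and it takes a genuinely different route from the paper's. The paper stays in the original coordinates: it expands $Q$ of the translated vector minus $Q([a,b])$ as an explicit quadratic in $a,b$ with coefficients built from $u=F_{n-1}$, $v=F_n$, $w=F_{n+1}$, and then claims negativity by multiplying by $v/mj$ and ``replacing $vb$ by $ua$'' (resp.\ $va$ by $wb$), citing the slope hypothesis. Your change of basis to $\{\gamma,\gamma'\}$ converts each translation into motion along one coordinate and reduces the lemma to one-variable parabola geometry; in your coordinates the first difference is exactly
\begin{equation*}
Q\bigl((p-mjq)\gamma+q\gamma'\bigr)-Q\bigl(p\gamma+q\gamma'\bigr)=mjq\bigl(mq(j-1)-2p\bigr),
\end{equation*}
and symmetrically $mjp\bigl(mp(j-1)-2q\bigr)$ for the second. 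One small point you should make explicit: the swap $p\leftrightarrow q$ must be accompanied by $F_{n-1}\leftrightarrow F_{n+1}$; under this joint swap the recursion, the identity $F_n^2-F_{n-1}F_{n+1}=1$, and the two positivity constraints all correspond, so your symmetry reduction is legitimate.

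Moreover, your opening caveat about where $Q$ is defined is not pedantry --- it is necessary. The displayed formula shows the claimed inequality is unconditionally true for $j=1$ but false for $j\ge 2$ if $Q$ is extended to all of $\mathbb Z^2$ by its formula: with $m=3$, $n=1$, $[a,b]=[7,2]$ (so $p=1$, $q=2$), the first translation with $j=2$ gives $[-5,2]$, and $Q([-5,2])=59>11=Q([7,2])$. The paper's argument does not detect this because its replacement step substitutes the smaller $ua$ for the larger $vb$ inside an expression that is quadratic, not monotone, in $vb$; such a substitution is only value-increasing on the decreasing side of the relevant parabola, which holds for $j=1$ but fails in general. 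This does no harm downstream: Proposition \ref{pro-pr} invokes the lemma only with $j=1$ and with the output guaranteed by Lemma \ref{lem-ab} to lie in $\mathcal P^+$, so either your restricted statement or the unconditional $j=1$ case suffices for the paper's main theorem.
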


\begin{proof}
Let $u=F_{n-1}, v=F_{n}$ and $w=F_{n+1}$ for convenience. First, set
\[ x=a+mjauv-mjbv^2 \quad \text{ and } \quad y=b+mjau^2-mjbuv .\]
We want to prove \[x^2 -mxy+y^2 < a^2 -m ab + b^2 . \]
We compute  
\begin{align} \label{eq-to}
& x^2 -mxy+y^2 -(a^2 -m ab + b^2 )  \\ &= 
mj(mju^4 - m^2ju^mv + mju^2v^2 - mu^2 + 2uv)a^2 \nonumber \\
& + mj(mju^2v^2 - m^2juv^m + mjv^4 - 2uv + mv^2)b^2 \nonumber  \\
&- 2mj(mju^mv - m^2ju^2v^2 + mjuv^m - u^2 + v^2)ab \nonumber  
\end{align}

Since  $ua<vb$, we multiply the identity \eqref{eq-to} by $v/mj$ and then replace $vb$ by $ua$ to obtain the inequality
\begin{align*}
\eqref{eq-to}\times ( v/mj ) &< (mju^4v - m^2ju^mv^2 + mju^2v^m - mu^2v + 2uv^2)a^2 \\
& + (mju^2v - m^2juv^2 + mjv^m - 2u + mv)u^2a^2 \\
&- 2(mju^mv - m^2ju^2v^2 + mjuv^m - u^2 + v^2)ua^2 =0.
\end{align*}
Thus we have \eqref{eq-to}$<0$ as desired.

Now let 
\[ x_1=a-mjavw+mjbw^2 \quad \text{ and } \quad y_1=b-mjav^2+mjbvw ,\]
and compute to obtain
\begin{align} \label{eq-too}
& x_1^2 -mx_1y_1+{y_1}^2 -(a^2 -m ab + b^2 )  \\ &= 
mj(mjv^4 - m^2jv^mw + mjv^2w^2 + mv^2 - 2vw)a^2 \nonumber \\
& + mj(mjv^2w^2 - m^2jvw^m + mjw^4 + 2vw - mw^2)b^2 \nonumber  \\
&- 2mj(mjv^mw - m^2jv^2w^2 + mjvw^m + v^2 - w^2)ab .\nonumber  
\end{align}

Since $w b <va$, we multiply the identity \eqref{eq-too} by $v/mj$ and then replace $va$ by $wb$  to obtain the inequality
\begin{align*}
\eqref{eq-too}\times ( v/mj ) &< (mjv^m - m^2jv^2w + mjvw^2 + mv - 2w)w^2b^2\\
&  + (mjv^mw^2 - m^2jv^2w^m + mjvw^4 + 2v^2w - mvw^2)b^2 \\
& - 2(mjv^mw - m^2jv^2w^2 + mjvw^m + v^2 - w^2)wb^2 =0.
\end{align*}
This implies \eqref{eq-too}$<0$ as desired.

\end{proof}

Combining the lemmas in this section, we obtain the following proposition, which is a main step toward the proof of Theorem \ref{thm-main}.
\begin{prop} \label{pro-pr}
Assume that $[a,b]\in \mathcal P^+$. Then there exists $[a_0, b_0] \in \mathcal P^+$ such that $[a_0,b_0]$ is a reduced root of $\mathcal H(m)$ and $s([a_0,b_0]) = s([a,b])$ or equivalently, $\beta([a_0,b_0]) = \beta([a,b])$.  
\end{prop}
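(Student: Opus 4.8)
The plan is to reduce an arbitrary $[a,b]\in\mathcal P^+$ to a reduced root of $\mathcal H(m)$ by a descent argument on the quadratic value $Q([a,b])=a^2+b^2-mab$, using the transformations from Lemma \ref{lem-sequiv} to preserve the associated rigid reflection at each step. First I would recall that, by \eqref{eqn-root}, $[a,b]$ is a root of $\mathcal H(m)$ precisely when $Q([a,b])\le 1$, and that every non-simple real root is reduced; since elements of $\mathcal P^+$ automatically have $\gcd(a,b)=1$ and $ab\neq 0$, being a root already forces reducedness. So the target is to arrive at some $[a_0,b_0]\in\mathcal P^+$ with $Q([a_0,b_0])\le 1$ and $s([a_0,b_0])=s([a,b])$; the reducedness is then free.

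The heart of the argument is an induction on the nonnegative integer $Q([a,b])$. If $Q([a,b])\le 1$ we are done by the preceding paragraph. Otherwise $[a,b]$ is not a root, and I would first normalize using $a>b$: by the symmetry $a\leftrightarrow b$ (which swaps the roles of $s_1,s_3$ and corresponds to reflecting the line segment $\eta$), and by Corollary \ref{cor-a} which lets me subtract multiples of $mb$ from $a$ without changing $s([a,b])$, I can arrange that the slope $b/a$ lies in the interval $\tfrac{F_{n-1}}{F_n}<\tfrac b a<\tfrac{F_n}{F_{n+1}}$ for a suitable $n$ (these Farey-type intervals tile the admissible range of slopes, with the endpoints corresponding to the genuine real roots $[F_{n+1},F_n]$). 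With the slope so located, Lemma \ref{lem-ab} guarantees that at least one of the two candidate transformations produces an element again lying in $\mathcal P^+$; I choose that one. Lemma \ref{lem-sequiv} (applied with the relevant $\kappa=-F_{n-1}a+F_nb$ or its companion) shows this transformation preserves the rigid reflection, so $s$ of the new element equals $s([a,b])$, and Lemma \ref{lem-ineq} (with $j=1$) shows the new element has strictly smaller value of $Q$.

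The induction then closes: strictly decreasing the nonnegative integer $Q$ while staying in $\mathcal P^+$ and preserving $s([a,b])$ must terminate, and the only place it can stop is when the current element is a root, i.e. $Q\le 1$, giving the desired $[a_0,b_0]$. The equivalence of the two conclusions $s([a_0,b_0])=s([a,b])$ and $\beta([a_0,b_0])=\beta([a,b])$ is immediate since $\beta$ is determined by the rigid reflection. The main obstacle I anticipate is the bookkeeping in the reduction step: verifying that after applying Corollary \ref{cor-a} and the $a\leftrightarrow b$ symmetry the slope genuinely falls into one of the intervals $(\tfrac{F_{n-1}}{F_n},\tfrac{F_n}{F_{n+1}})$ for some $n\ge 1$ (so that Lemmas \ref{lem-ab}, \ref{lem-sequiv}, \ref{lem-ineq} all apply), and confirming that the hypotheses $a'>0,\,b'>0$ of Lemma \ref{lem-sequiv} hold for the branch selected by Lemma \ref{lem-ab}. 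Matching the sign conventions of $\kappa$ in Lemma \ref{lem-sequiv} against the two cases of Lemma \ref{lem-ab} is the delicate point, but once the slope is correctly localized these are routine consequences of the inequalities already established.
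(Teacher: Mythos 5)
Your proposal is correct and follows essentially the same route as the paper: descend on $Q$ by locating the slope $b/a$ in an interval $\bigl(\tfrac{F_{n-1}}{F_n},\tfrac{F_n}{F_{n+1}}\bigr)$, apply Lemma \ref{lem-ab} to pick a transformation staying in $\mathcal P^+$, and invoke Lemmas \ref{lem-sequiv} and \ref{lem-ineq} to preserve $s([a,b])$ and strictly decrease $Q$ until $Q\le 1$. The only cosmetic slip is calling $Q$ a nonnegative integer (it can be negative for imaginary roots), but this is harmless since the descent only continues while $Q>1$.
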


\begin{proof}
If $[a,b]\in \mathcal P^+$ is a root of $\mathcal H(m)$, it is already reduced from the definition of $\mathcal P^+$ and we simply take $[a_0, b_0]=[a,b]$. Assume that $[a,b]$ is not a root of $\mathcal H(m)$. Without loss of generality, we may further assume that $a>b$. Then we have   $\frac {F_{n-1}}{F_{n}} < \frac b a < \frac {F_{n}}{F_{n+1}}$ for some $n \in \mathbb Z_{>0}$. By Lemma \ref{lem-ab}, we have 
either $[a,b]-m(-F_{n-1}a + F_{n}b)[F_{n}, F_{n-1}]  \in \mathcal P^+$ or $[a,b]+m(-F_{n}a + F_{n+1}b)[F_{n+1}, F_{n}]  \in \mathcal P^+$.
Put $[a',b']= [a,b]-m(-F_{n-1}a + F_{n}b)[F_{n}, F_{n-1}]$ or $[a',b']= [a,b]+m(-F_{n}a + F_{n+1}b)[F_{n+1}, F_{n}]$, so that $[a',b'] \in \mathcal P^+$.
Then  we have $s([a',b'])=s([a,b])$ and $Q([a',b'])<Q([a,b])$ by Lemmas \ref{lem-sequiv} and \ref{lem-ineq}, respectively. 

If $Q([a',b']) \le 1$ then $[a',b']$ is a positive reduced root of $\mathcal H(m)$ from \eqref{eqn-root} and we take $[a_0,b_0]=[a',b']$. If $Q([a',b']) > 1$ then $[a',b']$ is not a root of $\mathcal H(m)$ and we repeat the process by putting $a'$ and $b'$ to be new $a$ and $b$. Clearly, this process ends in a finite number of steps.
\end{proof}

\begin{example} \label{exm-3}
Assume that $m=3$, and consider $[487, 186]$. Since $Q([487,186])=19$, it is not a root of $\mathcal H(3)$. Note that $\frac {21}{55} < \frac {186}{487} < \frac {55}{144}$. We compute $3\times (-55\times 487 +144 \times 186) =-3$ and
\[ [487,186] - 3 \times [144,55] = [55,21] \in \mathcal P^+. \] Since $Q([55,21])=1$, $[55,21]$ is a real root of $\mathcal H(3)$ and the process ends here. Indeed, we have
$s([487,186])=s([55,21])$ and
\[ \beta([487,186])=\beta([55,21]) =6\alpha_1 + 8\alpha_2 +17 \alpha_3. \]

Now consider $[1789,683]$ with $Q([1789,683])=1349$. Since $\frac {8}{21} < \frac {683}{1789} < \frac {21}{55}$, we get
\[ [1789,683] +3(-21 \times 1789 + 55 \times 683) [55,21]= [1129,431] , \qquad Q([1129,431])=605 . \]
We continue to obtain
\begin{align*} &[1129,431] +3(-21 \times 1129 + 55 \times 431) [55,21]= [469,179] ,& & Q([469,179])=149,\\
&[469,179] -3(-8 \times 469 + 21 \times 179) [21,8]= [28,11] , & &Q([28,11])=-19. \end{align*}
Thus $[28,11]$ is an imaginary root of $\mathcal H(3)$, and we have \[ \beta([1789,683]) = \beta([28,11])=55 \alpha_1 + 55 \alpha_2 + 144 \alpha_3 . \]
\end{example}

\begin{lem} \label{lem-page}
Let $\eta$ be any non-self-crossing admissible curve with $\upsilon(\eta)\in\mathfrak{R}$. Then  there exists $[a,b]\in\mathcal{P}^+$ such that $s(\eta)=s([a,b])$.
\end{lem}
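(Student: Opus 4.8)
The goal of Lemma \ref{lem-page} is to show that every rigid reflection of $W(m)$ actually comes from the line-segment construction, i.e.\ from some $[a,b]\in\mathcal{P}^+$, so that the image of $s(\cdot)$ over $\mathcal{P}^+$ already exhausts all rigid reflections. The essential point is topological: an arbitrary non-self-crossing admissible curve $\eta$ on the torus $\Sigma$ is, up to isotopy rel basepoint, determined by its homotopy class, and a \emph{simple} (non-self-crossing) closed-up curve on the torus must be homotopic to a straight line segment of some rational slope. So first I would lift $\eta$ to the universal cover $\mathbb{R}^2$ of the torus, starting at the basepoint (the common endpoint of $T_1$ and $T_n$, which lifts to the integer lattice) and ending at some lattice point $(p,q)\in\mathbb{Z}^2$; the endpoint is forced to be a lattice point by the admissibility condition that $\eta$ begins and ends at a vertex.

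\textbf{Key steps.} The first step is to argue that, because $\eta$ has no self-crossings on $\Sigma$, its lift from $(0,0)$ to $(p,q)$ is isotopic (rel endpoints, through admissible curves that do not cross the $\mathbb{Z}^2$-translates of itself) to the straight segment $\eta([p,q])$ joining $(0,0)$ to $(p,q)$. This is the standard fact that simple curves on the torus are classified by their homology class; the non-self-crossing hypothesis is exactly what rules out the curve winding back on itself and forces it into the minimal-intersection position, which is the straight line. The second step is to reduce to the primitive case: writing $d=\gcd(p,q)$ and $[a,b]=[p/d,\,q/d]\in\mathcal{P}^+$ (after checking $a,b>0$, i.e.\ that the slope lies in the relevant range; the degenerate cases where $\eta$ is homotopic to a curve of slope $0$ or $\infty$ correspond to the generators $s_1,s_3$ and are handled directly), one observes that traversing the segment to $(p,q)$ reads off the same alphabet word, hence the same reflection, as traversing $\eta([a,b])$ repeated $d$ times. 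But a genuine \emph{reflection} (an element of $\mathfrak{R}$, an odd palindrome) cannot be a proper power unless $d=1$ after accounting for the boundary contributions, and one shows $s(\eta)=s([a,b])$ directly. The third step is then to identify $s(\eta)$ with $s([a,b])$: since isotopic admissible curves cross the labeled curves $\mathcal{T}$ in the same cyclic sequence of labels, $\upsilon(\eta)$ and $\upsilon(\eta([a,b]))$ yield the same element of $W(m)$, giving $s(\eta)=s([a,b])$.

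\textbf{Main obstacle.} The hard part will be making the isotopy argument genuinely rigorous within the combinatorial framework the paper uses, rather than merely invoking the topological classification of simple curves on the torus. Specifically, I must ensure that the isotopy from $\eta$ to the straight segment can be chosen to pass through \emph{admissible} curves, and that the reading word $\upsilon$ is an isotopy invariant only up to the Coxeter relations of $W(m)$ (moves that either slide a transversal intersection across a vertex, or push the curve across a digon bounded by two of the labeled curves, must correspond to relations $s_i^2=e$ or $(s_is_j)^{m_{ij}}=e$). In other words, I must verify that isotopy of admissible curves induces equality in $W(m)$, not just equality of homotopy classes. The cleanest route is to argue that any two non-self-crossing admissible curves in the same isotopy class differ by a finite sequence of such elementary moves, each of which preserves $s(\cdot)$, and that the straight segment is the unique geodesic representative with minimal intersections with $\mathcal{T}$; then $\eta$ and $\eta([a,b])$ are connected by these moves. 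Handling the basepoint and the segments $L_1,L_2$ carefully (so the start/end conditions of admissibility are respected throughout the isotopy) is the technical crux.
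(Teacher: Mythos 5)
Your first step---that a non-self-crossing admissible curve is isotopic rel endpoints to the straight segment joining $(0,0)$ to its endpoint $(p,q)$ in the universal cover---is false, and this is exactly the difficulty the lemma exists to handle. The relevant isotopy must take place in the torus with the vertex $V$ marked (equivalently, in the punctured torus), because pushing the curve \emph{across} $V$ does not preserve $s(\cdot)$: near $V$ the three curves $T_1,T_2,T_3$ give six local branches, and sliding an arc from one side of $V$ to the other replaces a subword such as $s_1s_2s_3$ by $s_3s_2s_1$, which is not an identity in $W(m)$ (there is no Coxeter relation involving all three generators, and $m_{13}=\infty$). But in the punctured torus the isotopy class of a simple arc from $V$ to $V$ is \emph{not} determined by $(p,q)$: the arc may spiral any number of times around its endpoints without creating self-intersections. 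Passing to the cover $\mathbb{R}^2$ of the unpunctured torus, as you do, discards precisely this spiraling data (the universal cover of the punctured torus is not $\mathbb{R}^2$). So your reduction to a single straight segment skips the main content.

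The paper's proof confronts this head-on: it puts $\eta$ into the normal form ``spiral, then line segment $\nu$, then opposite spiral,'' so that $s(\eta)$ has one of the shapes $(s_3s_2s_1)^n\,w\,(s_1s_2s_3)^n$ with $w\in\{s(\nu),\ s_3s(\nu)s_3,\ s_3s_2s(\nu)s_2s_3\}$, and then absorbs the spiral conjugation using Lemma \ref{sab}\,(3), which shows that conjugation by $s_3s_2s_1$ carries $s([a,b])$ to $s([c,d])$ with $[c,d]=\sigma_1\sigma_2[a,b]$, i.e.\ stays inside the family $\{s([a,b]) : [a,b]\in\mathcal P^+\}$; the partial-spiral cases and endpoints with negative first coordinate are converted to $[c+md,d]\in\mathcal P^+$ via the Dyck-path computation and Corollary \ref{cor-a}. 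Your proposal contains no ingredient playing the role of Lemma \ref{sab}\,(3), and your ``main obstacle'' paragraph identifies the isotopy-invariance issue only for bigon moves (which are indeed harmless, giving $s_is_i=e$), not for the vertex move, which is the one that actually fails. A secondary point: your worry about $\gcd(p,q)>1$ is moot, since a segment through an intermediate lattice point is not admissible, so the segment part of the normal form is automatically primitive; but fixing that would not close the main gap.
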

\begin{proof}
The curve $\eta$ is isotopic to a spiral (around the origin) followed by a line segment which is then followed by the opposite spiral (around the end point of $\eta$). Without loss of generality, we may assume that the first spiral goes around counterclockwise.  Then $s(\eta)$ can be written in one of the forms
\[ (s_3s_2s_1)^ns(\nu)(s_1s_2s_3)^n, \quad (s_3s_2s_1)^ns_3s(\nu)s_3(s_1s_2s_3)^n, \quad \text{and} \quad (s_3s_2s_1)^ns_3s_2s(\nu)s_2s_3(s_1s_2s_3)^n \] 
for some $n \ge 0$ and a line segment $\nu$. We will consider the case when $n=0$ and show that each of the forms is equal to $s([a,b])$ with $[a,b] \in \mathcal P^+$ and $a \ge b$. 
Then  we immediately obtain the statement for any $n>0$ by Lemma \ref{sab} (3) and an induction argument. Thus we only need to consider each of the reflections 
\[ s(\nu), \quad s_3s(\nu)s_3, \quad \text{and} \quad s_3s_2s(\nu)s_2s_3. \]

First, if $s(\eta)=s(\nu)$ with $\nu$ a line segment, then we have $s(\eta)=s([a,b])$ with  $a \ge b$, if necessary, by applying Corollary \ref{cor-a}. 

Next, if $s(\eta) = s_3s(\nu)s_3$, then $s(\nu)$ starts with the letter 1. 
Let ${D}(\eta)$ be the lattice path from $(0,0)$ to $(0,1)$ then to the end point of $\eta$ that goes North and West, and is closest to (but never crosses) the line segment $\nu$.  

Assign the element $s_2s_1\in W(m)$ to each vertical edge of  ${D}(\eta)$,  and $s_1s_2s_3s_1$ to each horizontal edge. Let $w(\eta)\in W(m)$ be the product of these elements obtained by reading them while traveling along $D(\eta)$. Then $s(\eta)=s_3s_2w(\eta)s_1$.

Let $(c,d)$ be the end point of $\eta$, and consider the line segment from $(0,0)$ to $(c+md,d)$.    Then $s(\eta)=s([c+md,d])$, and by applying Corollary \ref{cor-a} if necessary, we obtain $s(\eta)=s([a,b])$ with $a\ge b$. Since $c+d>0$, we have $[c+md, d] \in \mathcal P^+$. For example, the following figures illustrate the case $m=3$ and $(c,d)=(-2,3)$.
\begin{center}
\begin{tikzpicture}[scale=0.7]
\draw  (0,0) grid (4,5);
\draw (3,0)--(0,3);\draw (2,0)--(0,2);\draw (1,0)--(0,1);
\draw (4,0)--(0,4);\draw (4,1)--(0,5);\draw (4,2)--(1,5);\draw (4,3)--(2,5);\draw (4,4)--(3,5);
\draw[blue,line width= 0.5mm] (3,1)--(3,3)--(2,3)--(2,4)--(1,4);
\draw[red,rounded corners= 0.5 mm, line width= 0.3mm] (3,1)--(3.3,1.3)--(2.8,1.4)--(1.2,3.6)--(0.7,3.7)--(1,4);
\end{tikzpicture}$\hspace{30 pt}$
\raisebox{20pt}{\begin{tikzpicture}[scale=0.7]
\draw  (0,0) grid (7,3);
\draw (3,0)--(0,3);\draw (2,0)--(0,2);\draw (1,0)--(0,1);
\draw (4,0)--(1,3);\draw (5,0)--(2,3);\draw (6,0)--(3,3);\draw (7,0)--(4,3);\draw (7,1)--(5,3);
\draw[blue,line width= 0.5mm] (0,0)--(3,0)--(3,1)--(5,1)--(5,2)--(7,2)--(7,3);
\draw[red, line width= 0.3mm] (0,0)--(7,3);
\end{tikzpicture}}
\end{center}
Here we have $$\aligned s(\eta)
&=s_3s_2(s_2s_1)^2(s_1s_2s_3s_1)(s_2s_1)(s_1s_2s_3s_1)s_1=s_3s_1s_3s_1s_3\\
&=s_2s_3s_2s_3s_2s_1s_2s_3s_2s_3s_2s_1s_2s_3s_2s_3s_2 =s([7,3]).
\endaligned$$

Lastly, suppose that  $s(\eta) = s_3s_2s(\nu)s_2s_3$. Let $(c,d)$ be the end point of $\eta$. If $[c+md, d] \in \mathcal P^+$ then a similar argument to the previous case shows that $s(\eta) = s([c+md, d])$. Otherwise, we have $c+md <0$ and take a curve $\eta'$ such that 
\[ s(\eta')=s_3s_2 s(\nu') s_2 s_3 \]
where $\nu'$ is a line segment between $(c+md+\epsilon, d-\epsilon)$ and $(-\epsilon, \epsilon)$ for some small $\epsilon >0$. Then one can see that $s(\eta)=s(\eta')$. Repeating this process, we obtain a curve $\eta''$ whose end point is $(c'',d)$ with $c''+md>0$ and we are done by Corollary \ref{cor-a} as $s(\eta)=s([c''+md,d])$.
\end{proof}

\begin{proof}[Proof of Theorem \ref{thm-main}]
Assume that $s(\eta)$ is a rigid reflection of $W(m)$ given by a non-self-intersecting admissible curve $\eta$. By Lemma \ref{lem-page}, there exists $[a,b] \in \mathcal P^+$ such that $s(\eta)=s([a,b])$. Then by Proposition \ref{pro-pr} there exists a reduced positive root $[a_0,b_0]$ of $\mathcal H(m)$ such that 
\[ s(\eta)=s([a,b])=s([a_0,b_0]). \]
This completes the proof.
\end{proof}

\end{document}